
\documentclass[12pt]{amsart}

\usepackage{hyperref}


\newtheorem{theorem}{Theorem}[section]

\newtheorem{lemma}[theorem]{Lemma}

\theoremstyle{definition}

\theoremstyle{remark}
\newtheorem*{remark}{Remark}

\numberwithin{equation}{section}

\setlength{\voffset}{-1.in}
\setlength{\hoffset}{-1.in}
\setlength{\oddsidemargin}{1in}
\setlength{\evensidemargin}{1in}
\setlength{\textwidth}{6.5in}
\setlength{\topmargin}{.25in}
\setlength{\headheight}{.25in}
\setlength{\headsep}{.50in}
\setlength{\textheight}{9in}
\setlength{\footskip}{0.5in}

\usepackage{amsmath}
\usepackage{amsthm}
\usepackage{amsfonts}
\usepackage{amssymb}
\usepackage{amscd}
\usepackage{stmaryrd}
\usepackage{hyperref}
\usepackage{verbatim}


\newcommand{\R}{\ensuremath \mathbb{R}}
\newcommand{\C}{\ensuremath \mathbb{C}}
\newcommand{\Z}{\ensuremath \mathbb{Z}}

\newcommand{\inv}{\ensuremath ^{-1}}

\newcommand{\Tr}{\ensuremath \text{Tr}}

\begin{document}

\title{Integral Traces of Weak Maass Forms of Genus Zero Odd Prime Level}
\author{Nathan Green}
\author{Paul Jenkins}

\begin{abstract}
Duke and the second author defined a family of linear maps from spaces of weakly holomorphic modular forms of negative integral weight and level 1 into spaces of weakly holomorphic modular forms of half integral weight and level 4 and showed that these lifts preserve the integrality of Fourier coefficients.  We show that the generalization of these lifts to modular forms of genus 0 odd prime level also preserves the integrality of Fourier coefficients.
\end{abstract}

\maketitle

\section{Introduction and Statement of Results}\label{sec1}
In 1998, Zagier \cite{Zagier} defined a family of lifts for weakly holomorphic modular forms.  He first showed that the traces of the Klein $j$-function, a weight 0 modular form, over CM points in the upper half plane give the coefficients of a modular form of weight $3/2$.  He then extended this to a full basis for the space of weakly holomorphic modular forms of weight 0, showing that the traces of any such weight 0 form are coefficients of a weight $3/2$ form.  This map is called the Zagier lift.  In the same paper, Zagier also gave several generalizations of this lift, notably to spaces of modular forms of weights $-2,-4,-6,-8$ and $-12$.\\

In 2008, Duke and the second author \cite{Jenkins} used Poincar\'e series to generalize these Zagier lifts to weakly holomorphic modular forms of level 1 in all negative weights $2-2s$ for $s\geq 2$.
The image of the lift is a weakly holomorphic modular form of weight $1/2+s$ or $3/2-s$, depending on the parity of $s$.  They expressed this image explicitly in terms of canonical basis elements.  A key consequence of their techniques is that these generalized Zagier lifts preserve the integrality of the Fourier coefficients of modular forms; if the original form has integral Fourier coefficients, then so will its image under the lift.\\

Using a different technique, Miller and Pixton \cite{Miller} also generalized the Zagier lift to negative weights, allowing them to extend the lift to higher levels as well.  Recent work by Alfes \cite{Alfes} has shown that the Zagier lift of a form with integral coefficients in any level will have rational coefficients with bounded denominators.  In this paper, we give a sharper bound on these denominators in the case of genus 0 odd prime levels, showing that the Zagier lifts actually preserve the integrality of coefficients.\\

Let $M_k(N)$ be the space of holomorphic modular forms of weight $k\in \frac{1}{2}\Z$ and level $\Gamma_0(N)$.  Note that if $k=s+1/2$ is half integral, then $N$ must be a multiple of 4.  Further, let $M^!_k(N)$ be the space of weakly holomorphic modular forms of weight $k$ and level $\Gamma_0(N)$.  Recall that a weakly holomorphic modular form is required to be holomorphic on the upper half plane, but is allowed to have poles at the cusps.  For $k\in \Z$, we define a subspace $M^\sharp_k(N)\subset M^!_k(N)$ which is the subspace of weakly holomorphic modular forms which are allowed to have poles only at the cusp at $(\infty)$.  Note that for forms on $\Gamma_0(1)$ these two spaces are equal; $M^!_k(1)=M^\sharp_k(1)$.  For half integral weights $k=s+1/2$ we define related subspaces $M^{\sharp+}_k(N)\subset M^{!+}_k(N)\subset M^!_k(N)$ of forms, where forms in $M^{!+}_k(N)$ are weakly holomorphic satisfying the plus space condition and forms in $M^{\sharp+}_k(N)$ are weakly holomorphic satisfying the plus space condition and have poles only at the cusp $(\infty)$ and its related cusps (see section 2).  A form $f$ satisfies the plus space condition if it has a Fourier expansion supported only on exponents congruent to $0,(-1)^s\pmod 4$, namely $$f(z)=\sum_{n\equiv 0,(-1)^s\pmod 4}a_nq^n,$$ where $q=e^{2\pi i z}$.\\

We let $d$ be an integer with $d\equiv 0,1\pmod 4$ and $D$ be an integer which is a fundamental discriminant (possibly 1).  Suppose that $dD<0$ and that $f$ is $\Gamma_0(N)$ invariant on the upper half plane.  We define the twisted trace of $f$ as
$$\Tr_{d,D}(f)=\sum_Q w_Q\inv \chi(Q)f(\tau_Q),$$
where the sum is over a complete set of $\Gamma_0(N)$ inequivalent positive definite integral quadratic forms $Q(x,y)=ax^2+bxy+cy^2$ with discriminant $dD=b^2-4ac$ and $a\equiv 0\pmod N$, and where
$$\tau_Q=\frac{-b+\sqrt{dD}}{2a}$$
is the associated CM point.  Here $w_Q$ is the cardinality of the stabilizer of $Q$ in $PSL_2(\Z)$ and
$$\chi(Q)=\chi(a,b,c)=\begin{cases}
\chi_D(r),& \text{if }(a,b,c,D)=1\text{ and }Q\text{ represents }r,\text{ where }(r,D)=1\\
0& \text{if }(a,b,c,D)>1,
\end{cases}$$
where $\chi_D$ is the Kronecker symbol $\left (\frac{D}{\cdot}\right )$.\\

We also need a weight raising differential operator for the variable $z=x+iy$.  We define
$$\mathcal{D}=\frac{1}{2\pi i}\frac{d}{dz}=q\frac{d}{dq},$$
and from this define the operator
$$\partial_k=\mathcal{D}-\frac{k}{4\pi y}.$$
We use this operator to define a weight raising operator, which takes weakly holomorphic modular forms of weight $2-2s$ to weak Maass forms of weight 0 (see page 162 of \cite{123}), by
$$\partial^{s-1}=(-1)^{s-1}\partial_{-2}\circ\partial_{-4}\circ\cdots\circ\partial_{4-2s}\circ\partial_{2-2s}.$$
For ease of notation we set $\hat s=s$ if $(-1)^sD>0$ and $\hat s=1-s$ otherwise.  We also set
$$\Tr^*_{d,D}(f)=(-1)^{\lfloor\frac{\hat s-1}{2}\rfloor}|d|^{-\frac{\hat s}{2}}|D|^{\frac{\hat s-1}{2}}\Tr_{d,D}(\partial^{s-1}f).$$

Using this, if we write the Fourier expansion of the modular form $f\in M^{!}_{2-2s}(N)$ as $f(z)=\sum a(n)q^n $, then we generalize the definition of the Zagier lift of a form $f(z)$ from \cite{Jenkins} by setting
\begin{equation}\label{zaglift}
\mathfrak{Z}_D f(z)=\sum_{m>0}a(-m)m^{s-\hat{s}}\sum_{n|m}\chi_D(n)n^{\hat s-1}q^{-\frac{m^2|D|}{n^2}}+C+\sum_{d:dD<0}\Tr^*_{d,D}(f)q^{|d|},
\end{equation}
where $C$ is the unique constant term which causes $\mathfrak{Z}_D(f)$ to be modular as described in (Theorem~\ref{theorem:ours}).  We note that $\mathfrak{Z}_D$ has an implicit dependence on the weight and level of the form it acts on.\\

We now state the main theorem from \cite{Jenkins}.

\begin{theorem}[\cite{Jenkins},Theorem 1]\label{theorem:jenkins}
Suppose that $f\in M^!_{2-2s}(1)$ for some integer $s\geq 2$.  If $D$ is a fundamental discriminant with $(-1)^sD>0$, then $\mathfrak{Z}_D f\in M^{!+}_{3/2-s}(4)$, while if $(-1)^sD<0$, then $\mathfrak{Z}_D f\in M^{!+}_{s+1/2}(4)$.  If $f$ has integral Fourier coefficients, then so does $\mathfrak{Z}_D f$.
\end{theorem}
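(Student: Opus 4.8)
The plan is to reduce to a canonical basis of the source space, prove the statement on basis elements using the exact coefficient formulas that come from Poincaré series, and then deduce integrality in general from the triangular shape of the basis. First I would recall that $M^!_{2-2s}(1)=M^\sharp_{2-2s}(1)$ has a canonical basis $\{f_{2-2s,m}\}_{m\geq -\ell}$ (with $\ell$ depending on $s$), where $f_{2-2s,m}$ has $q$-expansion with principal part exactly $q^{-m}$ and integral Fourier coefficients; these are the usual $\Delta$-power-times-monic-integral-polynomial-in-$j$ constructions. The transition matrix from $\{q^{-m}\}$ to $\{f_{2-2s,m}\}$ is unipotent and upper triangular over $\Z$, so a weakly holomorphic $f$ has integral coefficients if and only if it is a finite $\Z$-linear combination of the $f_{2-2s,m}$. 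Since each of the three pieces of \eqref{zaglift} — the principal part, the traces $\Tr^*_{d,D}(f)$, and the modularity-forced constant $C$ (which is linear in $f$ by its uniqueness) — depends linearly on $f$, the map $\mathfrak{Z}_D$ is linear, and it suffices to prove the theorem for $f=f_{2-2s,m}$.

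For a basis element I would realize $f_{2-2s,m}$ (or the two-variable generating function $\sum_m f_{2-2s,m}(z)\,p^m$) as a weight-$(2-2s)$ Poincar\'e series, apply the weight-raising operator $\partial^{s-1}$ to pass to a weight-$0$ weak Maass Poincar\'e series, and compute $\Tr_{d,D}(\partial^{s-1}f_{2-2s,m})$ by unfolding. The unfolded integral yields an explicit expansion in Sali\'e-type sums — the twist of the relevant Kloosterman sums by the genus character $\chi_D$ coming from the CM evaluation — against Bessel functions of index $s-\tfrac12$, the half-integral index arising because $\partial^{s-1}$ shifts the Bessel parameter by $s-1$ and the CM evaluation supplies the final half-step. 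On the target side, $M^{\sharp+}_k(4)$ with $k=\tfrac32-s$ or $k=s+\tfrac12$ carries its own canonical basis $\{g_{k,n}\}$, known to have integral Fourier coefficients (the graded ring of plus-space forms of level $4$ is generated by forms with integral coefficients, after Zagier and Eichler–Zagier), whose coefficients are given by Poincar\'e-series expansions involving the same Sali\'e sums and the same Bessel functions. Matching the two expansions — this is precisely a Katok–Sarnak/Duke-type Sali\'e sum identity — identifies $\mathfrak{Z}_D f_{2-2s,m}$ with the $\Z$-linear combination of the $g_{k,n}$ whose principal part is the one prescribed by \eqref{zaglift}; the normalizing exponents $m^{s-\hat s}$ and $n^{\hat s-1}$ and the factor $(-1)^{\lfloor(\hat s-1)/2\rfloor}|d|^{-\hat s/2}|D|^{(\hat s-1)/2}$ built into \eqref{zaglift} and $\Tr^*_{d,D}$ are exactly what is needed to cancel the archimedean and $D$-dependent factors and make both the principal part and the trace coefficients integral, and $C$ is forced to be the (integral) constant term of the matching combination. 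In particular $\mathfrak{Z}_D f_{2-2s,m}$ is modular of the stated weight, lies in the plus space, and has integral coefficients; the condition $(-1)^sD>0$ versus $<0$ merely selects $\hat s=s$ or $\hat s=1-s$, hence which of $\tfrac32-s$, $s+\tfrac12$ the Bessel index lands in. (Modularity alone could instead be obtained from a regularized theta lift, but the Poincar\'e-series computation delivers it together with the integrality that is the real point.)

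To assemble: for integral $f=\sum_m c_m f_{2-2s,m}$ with $c_m\in\Z$, linearity gives $\mathfrak{Z}_D f=\sum_m c_m\,\mathfrak{Z}_D f_{2-2s,m}$, a $\Z$-linear combination of plus-space weakly holomorphic forms of weight $k$ with integral Fourier coefficients, hence itself such a form. For general $f\in M^!_{2-2s}(1)$ the same linear expansion (now over $\C$) shows $\mathfrak{Z}_D f\in M^{!+}_k(4)$ with the correct $k$.

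The main obstacle is the middle step: evaluating the CM trace of the weight-raised Poincar\'e series and matching it with half-integral weight Poincar\'e coefficients. The two delicate points are (i) showing that $\partial^{s-1}$ carries the holomorphic weight-$(2-2s)$ Poincar\'e series to a weak Maass form whose singular values still have a clean closed form — which rests on $\partial^{s-1}$ acting on the $J$- and $I$-Bessel factors by an index shift of exactly $s-\tfrac12$, landing on the Bessel function occurring in weight $s+\tfrac12$ (resp. $\tfrac32-s$) Poincar\'e series — and (ii) the Sali\'e sum identity converting the genus-character-twisted weight-$(2-2s)$ Kloosterman sums into the Sali\'e sums that appear in the half-integral weight coefficients. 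Convergence of the Poincar\'e series in the weakly holomorphic (rather than cuspidal) range, and the associated regularization, is a secondary technical nuisance that I would handle by analytic continuation in an auxiliary spectral variable before specializing.
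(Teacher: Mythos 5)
Your overall strategy---reduce by linearity to the canonical integral basis of $M^!_{2-2s}(1)$, compute traces of the weight-raised Poincar\'e series by unfolding, and match the resulting Sali\'e-sum/Bessel expansions against half-integral weight Poincar\'e series---is the same skeleton as the cited proof in \cite{Jenkins} (and as the level-$p$ argument of Section \ref{sec5}, which imports the trace-to-Poincar\'e-coefficient step from \cite{Miller} rather than re-deriving it). But there is a genuine gap at the decisive step. The unfolding computation identifies $\Tr_{d,D}(\partial^{s-1}f_{2-2s,m})$ with coefficients $p_{s+1/2}^{(4)}(-|d|,\cdot)$ of the weight $s+1/2$ \emph{Poincar\'e series} $F_{s+1/2,-|d|}^{(4)}$, not with coefficients of the canonical basis element $f_{s+1/2,|d|}^{(4)}$. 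Your assertion that the integral canonical basis of the plus space has coefficients given by the same Sali\'e-sum/Bessel expansions is false whenever $S^{+}_{s+1/2}(4)\neq 0$ (already for $s=6$, where $S^{+}_{13/2}(4)\cong S_{12}(1)$): the two objects differ by a cusp form $C(z)=F_{s+1/2,-|d|}^{(4)}-f_{s+1/2,|d|}^{(4)}$, and the Poincar\'e-series coefficients are infinite sums of Kloosterman sums against Bessel functions with no visible integrality or even rationality. Concluding integrality therefore requires showing that $C(z)$ contributes nothing to the relevant combination of traces. This is the heart of the matter both in \cite{Jenkins} and in Theorem \ref{maintheorem} here: the contribution of $C(z)$ is recognized, via the Shimura lift, as the constant term of $\mathfrak{S}_D^{(4)}(C(z))\cdot f_{2-2s,m}$, a weakly holomorphic form of weight $2$ and level $1$, which necessarily vanishes. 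Your proposal never produces this (or any substitute) argument, so the integrality claim does not follow from what you have written; only after this step does duality convert the surviving Poincar\'e coefficients into coefficients of the integral weight $3/2-s$ basis.

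A secondary omission concerns the case $(-1)^sD<0$, where the image lands in the positive weight $s+1/2$ and a form in $M^{!+}_{s+1/2}(4)$ is not determined by its principal part once cusp forms exist; the integrality of the trace coefficients and of the constant $C$ does not follow merely by selecting $\hat s$. Both \cite{Jenkins} and this paper reduce that case to the case $(-1)^sD>0$ by means of the trace identity $\Tr_{m^2D',D}(f)=\sum_{a|m}\mu(a)\chi_{D'}(a)\sum_{b|(m/a)}\chi_D(b)\Tr_{(m/(ab))^2D,D'}(f)$ (the Lemma at the end of Section \ref{sec5}), a step absent from your outline.
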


The main result of this paper establishes the integrality preserving properties of the lift for odd prime levels with genus 0.  We will denote these primes by $p$ throughout the remainder of the paper.  The genus 0 odd primes are $\{3,5,7,13\}$.

\begin{theorem}\label{theorem:ours}
Suppose that $f\in M^\sharp_{2-2s}(p)$ for some integer $s\geq 2$ and $p$ a genus 0 odd prime.  If $D$ is a fundamental discriminant with $(-1)^sD>0$, then $\mathfrak{Z}_D f\in M^{!+}_{3/2-s}(4p)$, while if $(-1)^sD<0$, then $\mathfrak{Z}_D f\in M^{!+}_{s+1/2}(4p)$.  If $f$ has integral Fourier coefficients, then so does $\mathfrak{Z}_D f$, with the exception that if $s+1/2 = 5/2+6n$ for $n\in \Z$ and $p=7$ then $\mathfrak{Z}_D f$ has at most a single power of $2$ in the denominators of its Fourier coefficients.
\end{theorem}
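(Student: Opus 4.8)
The plan is to mimic the Poincar\'e series argument behind Theorem~\ref{theorem:jenkins}, adapted to level $p$, reducing every claim to the canonical bases of the genus $0$ spaces involved. Since $s\geq 2$ there are no nonzero holomorphic forms of weight $2-2s$, so $M^\sharp_{2-2s}(p)$ has a canonical basis $\{f_{p,m}\}_{m\geq m_0}$ in reduced row echelon form with respect to the order of the pole at $(\infty)$, with $f_{p,m}=q^{-m}+O(q^{m_1})$. Because $p$ is a genus $0$ prime, each $f_{p,m}$ equals $f_{p,m_0}$ times a monic integer polynomial in the Hauptmodul $j_p$ of $\Gamma_0(p)$ of degree $m-m_0$; writing $f_{p,m_0}$ as an explicit quotient built from $E_4$, $E_6$, $\Delta$ and powers of $j_p$ shows the $f_{p,m}$ have integral Fourier coefficients, so any $f$ with integral coefficients is a $\Z$-linear combination $\sum_m c_mf_{p,m}$. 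As $\mathfrak{Z}_D$ is linear, it then suffices to (i) identify $\mathfrak{Z}_Df_{p,m}$ inside the target space and (ii) bound the denominators of its Fourier coefficients.

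For (i) I would form weight $2-2s$ Poincar\'e series for $\Gamma_0(p)$ realizing the principal parts $q^{-m}$, apply $\partial^{s-1}$, and evaluate the twisted traces $\Tr_{d,D}$ over the CM points $\tau_Q$ with $a\equiv 0\pmod p$; after the normalization built into $\Tr^*_{d,D}$, the resulting $q^{|d|}$-coefficients are Fourier coefficients of a half integral weight Poincar\'e series for $\Gamma_0(4p)$ lying in the plus space, by the Katok--Sarnak/Kohnen--Zagier mechanism also used in \cite{Miller}. This places $\mathfrak{Z}_Df_{p,m}$ in $M^{!+}_{3/2-s}(4p)$ or $M^{!+}_{s+1/2}(4p)$ according to the sign of $(-1)^sD$, fixes its principal part at $(\infty)$ and the related cusps, and identifies the constant $C$ in \eqref{zaglift} as the unique one making the expansion modular. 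The essential output is a duality: writing $\{g_{p,n}\}$ for the canonical basis of the target, the $q^{|d|}$-coefficient of $\mathfrak{Z}_Df_{p,m}$ equals, up to an explicit normalizing constant, the $q^{m|D|}$-coefficient of $g_{p,|d|}$, equivalently the two-variable series $\sum_m\mathfrak{Z}_Df_{p,m}(z)\,q_\tau^{m|D|}$ is weakly holomorphic modular of the correct weight in each variable and symmetric under interchanging $z$ and $\tau$.

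For (ii) I would make these generating series explicit: $\sum_mf_{p,m}(z)\,q_\tau^m$ is a Faber-type series in $j_p(z)$ with integer coefficients, and through the Kohnen plus space identification of $M^{!+}_k(4p)$ with weakly holomorphic forms of integral weight and genus $0$ level $p$, the target basis $\{g_{p,n}\}$ is matched with integer polynomials in $j_p$ and its Fricke image times fixed integral-weight forms. Combining this with the duality of (i) shows $\mathfrak{Z}_Df_{p,m}$, and hence $\mathfrak{Z}_Df$, has integral Fourier coefficients in every case except $p=7$ with $s+1/2=5/2+6n$. In that weight class the minimal-pole-order element of the target's canonical basis carries a single power of $2$ in the denominator of one of its Fourier coefficients --- a purely $2$-adic phenomenon that I would trace to $\Gamma_0(7)$ being the unique genus $0$ odd prime level with two elliptic points of order $3$ and no elliptic point of order $2$. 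Since every other element of the target basis in this class is that one times an integer polynomial in the target Hauptmodul, and a short $2$-adic check of the duality constants rules out any further $2$'s, $\mathfrak{Z}_Df$ has at most a single power of $2$ in its denominators and no more.

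The step I expect to be the main obstacle is the duality in (i) at level $p$. Unlike the level $1$ setting of \cite{Jenkins}, there are several cusps on each side, one must verify that the twisted traces over forms with $p\mid a$ pair correctly against the plus space at level $4p$ including the condition at the cusps related to $(\infty)$, and one must establish the symmetry of the two-variable generating series in the presence of the Fricke involution $W_p$; the bookkeeping around the cusp $0$ is considerably heavier than at level $1$. A secondary difficulty is the sharp $2$-adic analysis in the $p=7$, weight $5/2+6n$ case, needed to show the denominators are bounded by \emph{exactly} one power of $2$.
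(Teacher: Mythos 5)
Your overall architecture (canonical bases on both sides, Poincar\'e series realizing the traces, a coefficient duality, reduction of integrality to integrality of basis elements) is the same as the paper's, but two steps that carry the real weight of the argument are missing or mis-stated. First, your ``duality'' is wrong as written: the $q^{|d|}$-coefficient of $\mathfrak{Z}_Df_{p,m}$ is not a single coefficient of a single dual basis element but a divisor sum, and --- crucially --- the lift of a level $p$ form lands in a combination of plus-space basis elements of \emph{two} levels, $4p$ and $4$, according to whether $p\mid n$ in the divisor sum (this is the $N(n)=4p/\gcd(n,p)$ in the paper's formula $\mathfrak{Z}_Df=\sum_{m>0}a(-m)\sum_{n\mid m}\chi_D(n)n^{s-1}f^{(N(n))}_{3/2-s,m^2|D|/n^2}$). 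More seriously, identifying the traces with Poincar\'e series coefficients is not the end of the matter: the Poincar\'e series $F^{(4p)}_{s+1/2,-|d|}$ differs from the canonical basis element by a cusp form $C(z)$, and one must show that the total contribution of $C(z)$ (and of its level $4$ analogue) to the trace vanishes. The paper does this by recognizing that contribution as the constant term of $\mathfrak{S}^{(4p)}_D(C)\cdot f^{(p)}_{2-2s,m}\in M^!_2(p)$, a product vanishing at the cusp $(0)$, and exploiting that $M_2(p)$ is one-dimensional with its form nonvanishing at $(0)$. Without this Kohnen--Shimura-lift step your formula in terms of basis elements, and hence the integrality claim, does not follow from the Poincar\'e series computation.

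Second, the integrality of the half-integral weight target bases is where the theorem actually lives, and your proposed route --- transporting integrality through a Kohnen-type isomorphism of $M^{!+}_k(4p)$ with integral weight forms of level $p$ --- does not work: such isomorphisms are Shimura-type correspondences and do not preserve integrality of Fourier coefficients in any useful way. The paper instead verifies integrality of finitely many explicitly computed seed forms by multiplying by $\theta(z)$ or $\theta(z)^3$ and invoking the Sturm bound, then propagates to all weights and pole orders by multiplying and dividing by $C_{k,p}(4z)\Phi_p(4z)$ and powers of $\Psi_p(4z)$ followed by row reduction. Your claim that every element of the target basis is the minimal one times an integer polynomial in a Hauptmodul is false for plus spaces (there are two residue classes of exponents mod $4$, hence two seeds in each ladder), and your proposed explanation of the $p=7$, $s+1/2=5/2+6n$ exception via elliptic points is speculation; the paper simply finds the half-integral coefficients in the explicit computation of $M_{17/2}(28)$ and carries the single power of $2$ through the ladder construction. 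The finite computational input is not optional here --- it is what detects and bounds the exceptional denominator.
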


\noindent We note that the initial part of theorem (\ref{theorem:ours}) can be deduced from \cite{Miller}.  In this paper, we give an independent proof which allows us to prove the statement on integrality.  We discuss the exception to the theorem in greater detail in section \ref{sec3}.\\

We give some examples of Theorem \ref{theorem:ours} before proceeding to the proof of the theorem.  Let $$E_2^{(5)}(z)=\tfrac{1}{4}(5E_2(5z)-E_2(z))$$ be the normalized weight 2 Eisenstein series of level 5, where $E_2(z)$ is the usual non-modular Eisenstein series of weight 2.  Further, let $$H_4^{(5)}(z)=\frac{\eta(5z)^{10}}{\eta(z)^2},$$ be the unique form in $M_4(5)$ with leading term $q^2$, where $\eta(z)$ is the usual Dedekind $\eta$-function.  Define the weakly holomorphic modular form $f(z) \in M_{-2}^{\sharp}(5)$ by $$f(z)=\frac{E_2^{(5)}(z)}{H_4^{(5)}(z)}=q^{-2} + 4q^{-1} + 5 - 16q - 11q^{2} - 48q^{3} + 134q^{4} + 80q^{5} - O(q^6).$$  If we let $D=1$, then the first few nonzero traces of $f$ are $$\Tr^*_{-4,1}(f)=\frac{-1}{4}\left  (\frac{1}{2}\partial f(\tfrac{-2+i}{5})+\frac{1}{2}\partial f(\tfrac{-3+i}{5} )  \right )=-1,$$
$$\Tr^*_{-11,1}(f)=\frac{-1}{11}\left  (\partial f(\tfrac{-3+i\sqrt{11}}{10})+\partial f(\tfrac{-7+i\sqrt{11}}{10} )  \right )=-16,\text{ and}$$
$$\Tr^*_{-15,1}(f)=\frac{-1}{15}\left  (\partial f(\tfrac{-5+i\sqrt{15}}{10})+\partial f(\tfrac{-5+i\sqrt{15}}{20}  ) \right )=10.$$  Thus the image of the Zagier lift of $f$ is $$\mathfrak{Z}_1 f = q^{-4} + 6q^{-1} + 5 - q^{4} - 16q^{11} + 10q^{15} - O(q^{16})\in M^!_{-1/2}(20).$$

Similarly, let $$E_2^{(7)}(z)=\tfrac{1}{6}(7E_2(7z)-E_2(z))$$ be the normalized weight 2 Eisenstein series of level 7 and let $$H_6^{(7)}=\eta(7z)^{10}\eta(z)^2\Phi_7(z)$$ be the form in $M_6(7)$ with leading term $q^4$.  Here, $\Phi_7(z)$ is a level 7 Hauptmodul; see section \ref{sec3} for a definition.  Define the weakly holomorphic modular form $g(z) \in M_{-4}^{\sharp}(7)$ by $$g(z)=\frac{E_2^{(7)}(z)}{H_6^{(7)}(z)}=q^{-4} + 2q^{-3} + 3q^{-2} + -10q^{-1} - 7
- 18q + 22q^{2} - 14q^{3} + 114q^{4} + 104q^{5} - O(q^6).$$  If we let $D=-3$, then the image of the Zagier lift of $g$ is
$$\mathfrak{Z}_{-3} g = q^{-48} + 2q^{-27} -q^{-12} -6q^{-3}- 266  + 426q - 2661q^{4} + 19542q^{8} + O^{9}\in M_{-3/2}^!(28).$$

The rest of the paper proceeds as follows.  In section \ref{sec2} we discuss the existence of poles of certain Poincar\'e series which are used in the main proof of our theorem and we study poles of half integral weight modular forms in the plus space in general.  In section \ref{sec3} we describe canonical bases for the spaces $M^{\sharp+}_{s+1/2}(4p)$.  In section \ref{sec4} we prove a duality principle for the coefficients of these basis elements, and in section \ref{sec5} we give the proof of Theorem \ref{theorem:ours}.

\section{Poles of Poincar\'e series}\label{sec2}
We require certain Poincar\'e series for the proof of the main theorem.  In this section we discuss the poles of these Poincar\'e series as well as poles in general for the plus space $M_{s+1/2}^{! +}(p)$.\\

We first give some notation related to the projection operator which is defined by Kohnen in \cite{Kohnen}.  We will use this projection operator to define the Poincar\'e series.
Let $G$ be the metaplectic group (also defined in \cite{Kohnen}) and let $A=\left(M,\phi(z)\right)\in G$ act on a form $g(z)$ by
\begin{equation}\label{slashop}
g(z)|A=g(Mz)\phi(z)\inv.
\end{equation}  The projection operator acting on a form $g$ is defined by

\begin{equation}
\label{projoperator} g(z)|pr=(-1)^{\lfloor (s+1)/2\rfloor}\frac{1}{3\sqrt{2}}\left(\sum_{v=-1,0,1,2}g|\xi A^*_v\right)+\frac{1}{3} g,
\end{equation}
where $\xi=\left(\left(\begin{smallmatrix} 4&1\\0&4\end{smallmatrix}\right),e^{(2s+1)\pi i/4}\right)\in G$, $A_v=\left(\begin{smallmatrix} 1&0\\4pv&1\end{smallmatrix}\right)\in \Gamma_0(4p)$, and $A_v^*$ is the projection of $A_v$ into the metaplectic group $G$.  The sum is over conjugacy classes mod 4, so it suffices to sum over the numbers $-1,0,1,2$.\\

The Poincar\'e series we use in the proof of the main theorem are described in detail by Bringmann and Ono in \cite{OnoArithmetic} and are generalized to higher levels by Miller and Pixton in \cite{Miller}.  We describe them briefly here.  For a real variable $y\in \R-\{0\}$, for weight $k=s+1/2$ and $w\in \C$, define
$$\mathcal{M}_w(y)=|y|^{-k/2}M_{\tfrac{k}{2}\,\text{sgn}(y),w-1/2}(|y|),$$
where $M_{v,\mu}(z)$ is the usual $M$-Whittaker function.  For $m\geq 1$ with $(-1)^{s+1}m\equiv 0,1\pmod 4$, define
$$\phi_{-m,w}(z) = \mathcal{M}_w(-4\pi m \text{Im}(z))e(-m\text{Re}(z)).$$
Then, for $4|N$, define the Poincar\'e series
$$\mathcal{F}_{s+1/2,-m}(N,w;z) = \sum_{A\in \Gamma_\infty \backslash \Gamma_0(N)} (\phi_{-m,w}|_k A)(z),$$
where $\Gamma_\infty$ is the set of translations in $SL_2(\Z)$.  Finally, we define
$$F_{s+1/2,-m}^{(N)}(z)=\begin{cases} \frac{3}{2}\mathcal{F}_{s+1/2,-m}(N,\tfrac{k}{2};z)|pr & \text{if } s\geq 1\\ \tfrac{3}{2(1-k)\Gamma(1-k)} \mathcal{F}_{s+1/2,-m}(N,1-\tfrac{k}{2};z)|pr & \text{if }s\leq 0.\end{cases}$$
If $s>0$, then $\phi_{-m,k/2}(z) = e^{-2\pi i m z}$ and $F_{s+1/2,-m}^{(N)}(z)\in M^{\sharp +}_{s+1/2}(N)$.  Otherwise, $F_{s+1/2,-m}^{(N)}(z)$ is a weak Maass form.  We refer the reader to \cite{OnoArithmetic} for more information on this case; we will use only the first case in this paper.\\

In order to examine the holomorphicity of $F_{s+1/2,-m}^{(N)}(z)$ at the cusps, we need to understand how acting on a modular form with the projection operator affects its values at the cusps in general.  We will denote the Fourier expansion of $g$ at the cusp $(\frac{a}{b})$ by $g^{(\frac{a}{b})}(z)$.  Calculating the value of the form $g(z)|pr$ at the cusp $(\frac{a}{b})$ involves the calculation
$$\lim_{z\rightarrow i\infty} (g(z)|pr)|\eta^{(\frac{a}{b})},$$
where $\eta^{(\frac{a}{b})}\in G$ is the element of the metaplectic group consisting of the matrix which maps the cusp at $(\infty)$ to the cusp at $(\frac{a}{b}),$ together with the correct automorphy factor.  We note that the cusps of $\Gamma_0(4p)$ are $(\infty),(\frac{1}{p}),(\frac{1}{2p}),(0),(\frac{1}{2}),(\frac{1}{4})$.  We examine the question of when the form $g(z)|pr$ is holomorphic at the cusp $(0)$, and apply this to the Poincar\'e series $F_{s+1/2,-m}^{(N)}(z)$.  For this calculation, we must act on each term of (\ref{projoperator}) by $\eta^{(0)}$.  We will give the calculation for the case $p=3$ and note that the cases for the other genus 0 odd primes are nearly identical.\\

Following Kohnen's technique in \cite{Kohnen} on page 251, we act first on the sum of the terms
$$g|\xi A_0^*+g|\xi A_2^*=g|\xi+g|\xi\inv.$$

When we operate on these two terms with $\eta^{(0)}$ and combine group elements of $G$, we get
\begin{equation}\label{firstpice}
g|(\xi+g|\xi\inv)|\eta^{(0)}=Cz^{-s-1/2}g\left(\frac{-1}{z}+\frac{1}{4}\right)+Cz^{-s-1/2}g\left(\frac{-1}{z}-\frac{1}{4}\right),
\end{equation}
where throughout the calculation $C$ will represent arbitrary constants.
Both of the forms in the right hand side of \ref{firstpice} are equivalent to $g^{(0)}(z)$ twisted by a character.  Since twisting by a character does not change the order of vanishing (or order of the pole) at a cusp, the sum of the first two terms $g|\xi A_0^*+g|\xi A_2^*$ is holomorphic (has a pole) at the cusp $(0)$ if and only if $g$ is holomorphic (has a pole) at $(0)$.\\

Next we operate on the term $g|\xi A_1^*$ from (\ref{projoperator}) in a similar way.  We first separate out a piece of this which is in the projection of $\Gamma_0(12)$ into $G$ and thus acts trivially on $g$.  From there we get
\begin{align*}
g|\xi A_1^*\eta^{(0)}&=g|\left(\left(\begin{smallmatrix}1&-16\\4&-48\end{smallmatrix}\right),C(z-12)^{s+1/2}\right)\\
&=g|\left(\left(\begin{smallmatrix}1&0\\4&1\end{smallmatrix}\right),(4z+1)^{s+1/2}\right)\left(\left(
\begin{smallmatrix}1&-16\\0&16\end{smallmatrix}\right),C\right)\\
&=Cg^{(\frac{1}{4})}\left(\frac{z}{16}-1\right).
\end{align*}
This shows that this term is holomorphic (has a pole) at $(0)$ if and only if $g$ is holomorphic (has a pole) at the cusp $\left(\frac{1}{4}\right)$.\\

For the next term of (\ref{projoperator}), $g|\xi A_{-1}^*$, we proceed almost identically.  The calculation is
\begin{align*}
g|\xi A_{-1}^*\eta^{(0)}&=g|\left(\left(\begin{smallmatrix}1&-8\\8&-48\end{smallmatrix}\right),C(z-6)^{s+1/2}\right)\\
&=g|\left(\left(\begin{smallmatrix}1&0\\8&1\end{smallmatrix}\right),(8z+1)^{s+1/2}\right)\left(\left(\begin{smallmatrix}
1&-8\\0&16\end{smallmatrix}\right),C\right)\\
&=Cg^{(\frac{1}{8})}\left(\frac{z}{8}-\frac{1}{2}\right).
\end{align*}
Since the cusp at $(\frac{1}{8})$ is equivalent to the cusp at $(\frac{1}{4}),$ this tells us that $g|\xi A_{-1}^*$ is holomorphic (has a pole) at $(0)$ if and only if $g$ is holomorphic (has a pole) at $(\frac{1}{4})$.\\

Finally, we act on the term $\frac{1}{3}g$ from (\ref{projoperator}).  This becomes simply $\frac{1}{3} g^{(0)}(z)$ which is exactly the Fourier series at the cusp $(0)$.  Thus we conclude that $g(z)|pr$ is holomorphic at the cusp $(0)$ if and only if the form $g(z)$ is holomorphic at the cusps $(0)$ and $(\frac{1}{4})$.  This concludes the calculation for level 3.  (We will see later that the value of a form in the plus space at the cusps $(0)$ and $(\frac{1}{4})$ is related to the value at the cusp $(\frac{1}{2})$ and that these three cusps are grouped together.  The order of a pole at any one of these cusps determines the order of the poles at the rest of them.)\\

Before we apply this analysis to the Poincar\'e series $F_{s+1/2,-m}^{(N)}(z)$, we will discuss where a general form $f(z)\in M_k^{\sharp +}(4p)$ is allowed to have poles.  Since $f(z)$ is already in the plus space, $$f(z)|pr=f(z).$$  Again, from Kohnen's work in \cite{Kohnen}, $$f(z)|pr=\frac{2}{3}[f(z)+\epsilon_1 f^{(\frac{1}{p})*}(4z)+\epsilon_2 f^{(\frac{1}{2p})*}(4z)]$$ where the $^*$ indicates that we are twisting by a character and $\epsilon_1,\epsilon_2$ are constants.  By examining the behavior of $f(z)$ at each cusp, this calculation shows that if $f(z)$ has a pole of order $l$ at $(\infty)$, then $f^{(\frac{1}{p})*}(4z)$ and $f^{(\frac{1}{2p})*}(4z)$ also have poles of order $l$ at $(\infty)$.  Thus the poles at the cusps $(\infty),(\frac{1}{p}),(\frac{1}{2p})$ are, in a sense, grouped together and determine each other's orders.\\

A calculation similar to the one above shows that operating on the projection operator with $\eta^{(\frac{1}{2})}$ gives $$f|pr|\eta^{(\frac{1}{2})}=\frac{2}{3}\left[f^{(\frac{1}{2})}+\epsilon_1 f^{(\frac{1}{4})*}+\epsilon_2 f^{(0)*}\right]$$ showing that the cusps of $(0),(\frac{1}{2}),(\frac{1}{4})$ are similarly grouped together.  Acting on $f|pr$  with the other operators $\eta^{(\frac{a}{b})}$ gives a similar result.  Therefore, if a form $f\in M_k^{\sharp +}(4p)$ is holomorphic (has a pole) at one of the three cusps $(0),(\frac{1}{2}),(\frac{1}{4})$ then it must be holomorphic (have a pole) at the other two as well.  The same is true for the cusps $(\infty),(\frac{1}{p}),(\frac{1}{2p})$.\\

We now prove the following theorem about the Poincar\'e series described above.  We note that we will use the notation described in the theorem throughout the rest of the paper.

\begin{theorem} \label{theorem:poincare}
Given a genus 0 odd prime $p$, $s\in \Z$ with $s\geq 2$ and any nonzero $m\in \Z$ with $(-1)^sm\equiv 0,1\pmod 4$ there exists a Poincar\'e series $$F_{s+1/2,m}^{(4p)}=q^m+\sum_{n\geq 0,n\equiv 0,(-1)^s\pmod 4}p_{s+1/2}^{(4p)}(m,n)q^n$$ which is a weakly holomorphic modular form of weight $s+1/2$ of level $4p$.  Each of these forms is holomorphic (and actually vanishes) at the cusps $(0),(\frac{1}{2}),(\frac{1}{4})$ and has a pole at the cusp $(\infty)$ with related poles at the cusps $(\frac{1}{p}),(\frac{1}{2p})$.
\end{theorem}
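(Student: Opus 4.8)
The plan is to realize $F_{s+1/2,m}^{(4p)}$ as the Kohnen projection of a classical weakly holomorphic Poincar\'e series and then read off its behavior at the cusps from the computations already carried out in this section. We treat the case $m<0$, which is the one for which the conclusion about a pole at $(\infty)$ has content; write $m=-\mu$ with $\mu\geq 1$. With $k=s+1/2$, start from the series $\mathcal{F}_{s+1/2,-\mu}(4p,\tfrac{k}{2};z)$ introduced above. Since $s\geq 2$ we have $s>0$, so the seed is simply $\phi_{-\mu,k/2}(z)=q^{-\mu}$ and $\mathcal{F}_{s+1/2,-\mu}(4p,\tfrac{k}{2};z)=\sum_{A\in\Gamma_\infty\backslash\Gamma_0(4p)}(q^{-\mu})|_k A$ is the ordinary weakly holomorphic Poincar\'e series of weight $k=s+1/2\geq 5/2>2$; it converges absolutely and defines a weakly holomorphic modular form of weight $s+1/2$ on $\Gamma_0(4p)$. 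Applying the projection operator $|pr$ of \eqref{projoperator} preserves modularity of weight $s+1/2$ and level $4p$ and forces the plus-space condition on exponents, and the hypothesis $(-1)^sm\equiv 0,1\pmod 4$ says precisely that $-\mu$ lies in an allowed residue class, so $|pr$ does not annihilate the principal term; with the normalization $F_{s+1/2,-\mu}^{(4p)}=\tfrac32\,\mathcal{F}_{s+1/2,-\mu}(4p,\tfrac{k}{2};z)|pr$ the leading coefficient is exactly $1$. This is the content of the fact recalled just before the theorem statement, so $F_{s+1/2,m}^{(4p)}\in M^{\sharp +}_{s+1/2}(4p)$ and, since the only negative power occurring in $\mathcal{F}$ at $(\infty)$ is $q^{-\mu}$ (the identity coset) while every other coset contributes a term holomorphic at $(\infty)$, the Fourier expansion has exactly the asserted shape $q^{m}+\sum_{n\geq 0,\,n\equiv 0,(-1)^s\pmod 4}p^{(4p)}_{s+1/2}(m,n)q^n$. (For a positive index one argues identically using the analogous holomorphic Poincar\'e series.)

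For the statement about the cusps I would first record the behavior of the \emph{un}-projected series $\mathcal{F}:=\mathcal{F}_{s+1/2,-\mu}(4p,\tfrac{k}{2};z)$: it is holomorphic on the upper half plane, and as a Poincar\'e series it has a pole only at $(\infty)$, where the identity coset contributes $q^{-\mu}$, while for every other coset $A$ and every scaling matrix $\sigma_{\mathfrak c}$ of a cusp $\mathfrak c\neq(\infty)$ the product $A\sigma_{\mathfrak c}$ has nonzero lower-left entry, so the corresponding term of $\mathcal{F}|_k\sigma_{\mathfrak c}$ tends to $0$ as $z\to i\infty$; hence $\mathcal{F}$ in fact vanishes at each of $(0),(\tfrac12),(\tfrac14),(\tfrac1p),(\tfrac1{2p})$. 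Now feed this into the projection-operator calculations of this section. The analysis leading through \eqref{firstpice} shows that $g|pr$ is holomorphic at $(0)$ exactly when $g$ is holomorphic at the cusps $(0)$ and $(\tfrac14)$, and expresses the expansion of $g|pr$ at $(0)$ as a twisted, rescaled linear combination of those of $g$ at $(0)$ and $(\tfrac14)$; since $\mathcal{F}$ vanishes at both, $F_{s+1/2,m}^{(4p)}$ is holomorphic and vanishes at $(0)$, and likewise at $(\tfrac12)$ and $(\tfrac14)$ by the analogous computations together with the grouping of these three cusps established above. At $(\infty)$, applying the identity $g|pr=\tfrac23\big[g+\epsilon_1 g^{(1/p)*}(4z)+\epsilon_2 g^{(1/2p)*}(4z)\big]$ with $g=\mathcal{F}$ shows the only pole of $F_{s+1/2,m}^{(4p)}$ at $(\infty)$ is the pole of order $|m|$ coming from the first summand $\mathcal{F}$, the other two summands being holomorphic at $(\infty)$ because $\mathcal{F}$ is holomorphic at $(\tfrac1p)$ and $(\tfrac1{2p})$; in particular no cancellation occurs. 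Finally, the grouping of the cusps $(\infty),(\tfrac1p),(\tfrac1{2p})$ for forms in $M^{\sharp +}_{s+1/2}(4p)$ proved above forces the related poles at $(\tfrac1p)$ and $(\tfrac1{2p})$, which completes the argument.

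The only genuine work is this bookkeeping of how $|pr$ redistributes cusp data — one must verify that the projection neither cancels the pole at $(\infty)$ nor creates a pole at $(0),(\tfrac12),(\tfrac14)$ — and that is exactly what the explicit $\eta^{(\mathfrak c)}$-computations earlier in the section supply. Everything else is routine: absolute convergence of the Poincar\'e series for weight $s+1/2>2$ is classical, and modularity, the plus-space condition, and membership in $M^{\sharp +}_{s+1/2}(4p)$ are quoted from \cite{OnoArithmetic} and \cite{Miller}. The genus-zero hypothesis on $p$ plays no essential role in this particular existence-and-cusps statement; we retain it for consistency with the later sections, where it is used crucially.
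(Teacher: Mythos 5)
Your proposal is correct and follows essentially the same route as the paper: establish that the unprojected Poincar\'e series $\mathcal{F}_{s+1/2,m}$ vanishes at every cusp other than $(\infty)$, then feed this into the section's explicit $\eta^{(\mathfrak{c})}$-computations for the projection operator to track where poles can occur and invoke the grouping of the cusps $(\infty),(\tfrac{1}{p}),(\tfrac{1}{2p})$ and $(0),(\tfrac{1}{2}),(\tfrac{1}{4})$. The only difference is cosmetic: where you sketch the standard coset-by-coset argument for the vanishing at the other cusps (which does require the routine interchange of limit and sum, valid since $s+1/2>2$), the paper simply cites a generalization of Theorem 3.3 of \cite{BruinierHilbert}.
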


\begin{proof}
A generalization of theorem 3.3 of \cite{BruinierHilbert} shows that the Poincar\'e series $\mathcal{F}_{s+1/2,m}$ is holomorphic (and actually vanishes) at all cusps except $(\infty)$.  Thus, following the above discussion, we may conclude that the form $F_{s+1/2,m}^{(4p)}=\mathcal{F}_{s+1/2,m}|pr$ is holomorphic (and actually vanishes) at the cusp $(0)$ and the related cusps $(\frac{1}{2})$ and $(\frac{1}{4})$ and has a pole at the cusp $(\infty)$ and its related cusps $(\frac{1}{p})$ and $(\frac{1}{2p})$.  We note that Ono mentions this generalization as well as some additional information about Poincar\'e series in section 5.3 of \cite{OnoAWS}.\\
\end{proof}

\section{Canonical Bases for level 4p Forms in the Plus Space}\label{sec3}
Canonical bases for integer weight spaces $M^!_k(p)$ are well understood.  In the proof of our main theorem, we will use a simple generalization of such bases as described in \cite{Garthwaite}, extended to all genus 0 primes.  We will also require canonical bases for the spaces $M^{\sharp+}_{s+1/2}(4p)$ for $s\in \Z$ which have special properties.  We describe these bases here.
\begin{theorem} \label{theorem:bases}
There exist explicit bases of modular forms $f_{s+1/2,m}^{(4p)}$ for the spaces $M^{\sharp+}_{s+1/2}(4p)$ for $s\in \Z$ which have a Fourier expansion of the form $$f_{s+1/2,m}^{(4p)}=q^{-m}+\sum_{n=n_0}^\infty a(n)q^n$$ where $-m,n\equiv 0,(-1)^{s}\pmod 4$ and $n_0$ is the value that causes these basis elements to have the largest possible gap between the exponent of the leading term and the exponent of the next term, and which have integral Fourier coefficients with the one exception that if $s+1/2 = 5/2+6r$ for $r\in \Z$ and $p=7$, then the Fourier coefficients may have a single power of 2 in the denominator.
\end{theorem}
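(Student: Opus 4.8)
The plan is to construct the basis elements $f_{s+1/2,m}^{(4p)}$ explicitly from a short list of forms with integral Fourier expansions and to track denominators through every step of the construction; the module structure is the organizing principle. Since $p$ is a genus $0$ prime, there is a level-$p$ Hauptmodul $\Phi_p$, normalized so that $\Phi_p=q^{-1}+O(1)$ with integral coefficients. Substituting $z\mapsto 4z$ produces a weight-$0$ form $\Psi(z)=\Phi_p(4z)$ on $\Gamma_0(4p)$ with integral Fourier coefficients, leading term $q^{-4}$ of coefficient $1$, expansion supported on multiples of $4$ (so multiplication by $\Psi$ preserves the plus space), a pole of order $4$ at the cusps of the $(\infty)$-group, and holomorphy at $(0),(\tfrac12),(\tfrac14)$; the statements about the cusps follow from the cusp computations of Section~\ref{sec2} together with the cusp structure of $\Gamma_0(p)$. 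The ring $\mathbb{C}[\Psi]$ then acts on $M^{\sharp+}_{s+1/2}(4p)$, and the analysis of Section~\ref{sec2} — a plus-space form of weight $s+1/2$ may have poles only at the $(\infty)$-group, supported on exponents $\equiv 0,(-1)^s\pmod 4$ — shows this module is finitely generated.

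It therefore suffices to exhibit, for each admissible residue class of $-m$ modulo $4$, a form in $M^{\sharp+}_{s+1/2}(4p)$ of minimal pole order with leading coefficient $1$ and integral Fourier coefficients (up to the exceptional factor of $2$): multiplying such a generator by powers of $\Psi$ gives, for every admissible $m$, a form with leading term $q^{-m}$ of coefficient $1$ and integral coefficients, and this collection hits each admissible pole order exactly once, so row-reducing it to echelon form is a triangular integral operation yielding the $f_{s+1/2,m}^{(4p)}$ with the stated leading term, maximal gap $n_0$, and integral coefficients. The generators I would assemble from the theta series $\theta(z)=\sum_n q^{n^2}\in M^{!+}_{1/2}(4p)$ and its low powers (of weight $\tfrac12$ or $\tfrac32$ according to the parity of $s$), all with integral coefficients, times canonical basis elements of the integer-weight spaces $M^{\sharp}_k(p)$ of a suitable integer weight $k$ (namely $s$ when $s$ is even and $s-1$ when $s$ is odd), which by the generalization of \cite{Garthwaite} to all genus-$0$ primes have integral coefficients; the product is then pushed into the plus space by the projection operator $|pr$ of Section~\ref{sec2}, and the two residue classes modulo $4$ are reached by first multiplying the integer-weight factor by a suitable power of $\Phi_p$ before projecting.

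The only operation in this construction that need not be integral is the projection $|pr$ applied to the product of a power of $\theta$ with an integer-weight form, since $|pr$ carries the scalar $\tfrac{1}{3\sqrt 2}$ and such a product need not already lie in the plus space. Tracking this against the structure of the relevant space of level-$p$ forms, one finds that the halves cancel except when $p=7$ and $2s\equiv 4\pmod{12}$, i.e. $s+1/2=5/2+6r$, where a single factor of $2$ remains; since every other step — multiplication by $\theta$, by integral integer-weight forms, or by $\Psi$, and the final row reduction — preserves integrality, this is the only denominator that can appear, and it is at most a single power of $2$, as asserted.

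The main obstacle is the honest verification that the explicit forms above genuinely generate $M^{\sharp+}_{s+1/2}(4p)$ over $\mathbb{Z}$ — equivalently over $\mathbb{Z}[\tfrac12]$ in the exceptional case — so that the staircase $\{\Psi^a\cdot(\text{generator})\}$ has no gaps and forces no hidden denominator beyond the one just analyzed. This requires matching the known integral structure of the integer-weight bases against a precise dimension count for $M^{\sharp+}_{s+1/2}(4p)$ in each residue class, carried out uniformly over both parities of $s$ and the full range $s\in\mathbb{Z}$; the small and negative weights, where the holomorphic plus-space forms of weight $s+1/2$ and hence the location of the maximal gap $n_0$ must be pinned down case by case, are the most delicate part of that bookkeeping.
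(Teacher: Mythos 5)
Your overall architecture --- seed forms in each admissible residue class mod $4$, a staircase obtained by multiplying by $\Psi_p(4z)$ (whose expansion is supported on multiples of $4$, so the plus space is preserved), and a row reduction with unit pivots that costs nothing integrally --- matches the bootstrap in the paper's proof and is sound as far as it goes. The paper does the same thing, and additionally shifts weight by multiplying and dividing by forms such as $C_{6,p}(4z)\Phi_p(4z)$ so that only a finite range of weights needs to be seeded.

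The genuine gap is in how you produce the seed forms and control their denominators. You propose to build them as $(\theta^j\cdot g)|pr$ for integer-weight $g$ of level $p$, and then assert that ``the halves cancel except when $p=7$ and $2s\equiv 4\pmod{12}$.'' That sentence is the entire content of the integrality claim in the theorem, and nothing in your argument establishes it: the projection operator carries the factor $\tfrac{1}{3\sqrt 2}$ and sums over four metaplectic translates, and there is no a priori reason the resulting rational coefficients have denominators dividing $2$, let alone that the $2$ disappears except in the one stated family. You also concede in your last paragraph that you have not verified that your proposed generators realize every admissible minimal pole order with leading coefficient $1$; without that, the staircase may have gaps and the row reduction may be forced to divide. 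The paper closes both holes by a route you do not take: it computes the holomorphic plus spaces $M^{+}_{s+1/2}(4p)$ directly (modular symbols for the full space, Ueda's isomorphisms for the plus-space dimensions, then linear algebra), observes integrality of finitely many coefficients, and \emph{certifies} integrality of all coefficients by multiplying each candidate basis element by $\theta$ or $\theta^3$ and identifying the product, via the Sturm bound, with a known integral form of even integer weight with leading coefficient $1$ (so that dividing back by the unit power series $\theta^j$ preserves integrality). The exceptional factor of $2$ is then an observed feature of the computed space $M_{17/2}(28)$ that propagates to all weights $\equiv 5/2\pmod 6$ through the weight-$6$ shift, not something deduced from the projection operator. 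To repair your proof you would need either an honest denominator analysis of $|pr$ on your specific products, or a finite verification step of the Sturm-bound type; as written, the central claim is asserted rather than proved.
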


\begin{proof}
We first define some auxiliary forms.  Let the modular theta function be $$\theta(z) =\sum_{n\in \Z} q^{n^2}.$$  Let $C_{6,3}(z)$ be the unique normalized cusp form of weight $6$ and level $\Gamma_0(3)$, let $C_{4,5}(z)$ be the unique normalized cusp form of weight 4 and level $\Gamma_0(5)$, let $C_{6,7}(z)$ be the normalized cusp form of weight 6 and level $\Gamma_0(7)$ which has leading term $q^3$, and let $C_{6,13}(z)$ be the normalized cusp form of weight 6 and level $\Gamma_0(13)$ with leading term $q^3$.  Also let $\eta(z)$ be the Dedekind eta function and let $$\Phi_p(z)=\left(\frac{\eta(pz)}{\eta(z)}\right)^{\tfrac{24}{(p-1)}}\in M_0^!(p)$$ be a level $p$ Hauptmodul.  Note that $\Phi_p(z)$ has a simple pole at the cusp $(0)$ and a simple zero at the cusp $(\infty)$.  Further, let $$\Psi_p(z)=\frac{1}{\Phi_p(z)}\in M_0^!(p),$$ which swaps the pole and zero.\\

To construct these bases for $M_{s+1/2}^{\sharp +}(4p)$ we start by describing a basis for the holomorphic space $M_{s+1/2}^{+}(4p)$, then use this basis to construct a basis for the full weakly holomorphic space $M_{s+1/2}^{\sharp +}(4p)$.  We begin by calculating a spanning set of Fourier expansions for holomorphic modular forms for the spaces $M_{s+1/2}(4p)$ using the techniques of modular symbols.  We calculate these explicitly for forms of each half integral weight space from $M_{11/2}(12)$ up to and including $M_{21/2}(12)$, from $M_{11/2}(20)$ up to and including $M_{17/2}(20)$, from $M_{11/2}(28)$ up to and including $M_{21/2}(28)$, and from $M_{7/2}(52)$ up to and including $M_{13/2}(52)$.  We row reduce this spanning set to get a set of forms each of which begins with $q^m+\sum_{n\geq n_0} a(n) q^n$ for each $m$ in the set $\{0,1,...,\dim(M_{s+1/2}(4p))-1\}$, where $n_0$ gives the largest possible gap between the first two exponents of the Fourier expansion.  The Magma software package \cite{Magma} has an especially good implementation for half integral weight modular forms which we used extensively in our calculations.  We also used SAGE \cite{sage} extensively.  \\

We calculate the dimension of each Kohnen plus space $M^+_{s+1/2}(4p)$ for the weights listed above using isomorphisms given by Ueda in \cite{Ueda}.
We then take linear combinations of forms in the row reduced spanning set for each weight and level to create a set of forms which span the plus space.  Call these forms $f_{k,m}^{(4p)}$.  We calculate that, up to a given exponent, the Fourier coefficients of each $f_{k,m}^{(4p)}$ are integral with the exception of $M_{17/2}(28)$, which has single powers of 2 in the denominators of the Fourier coefficients of some of the basis elements.  This exception shows up in infinitely many weights, specifically whenever $s+1/2 = 5/2+6n$ for $n\in \Z$.

\begin{remark}
For example, in $M_{-7/2}^{\sharp +}(28)$, the basis element with leading exponent $q^{-12}$ has a Fourier expansion
$$q^{-12} + \frac{1}{2}q^{-11} -\frac{3}{2}q^{-8} + 2q^{-7} + \frac{1}{2}q^{-4}
-4q^{-3} + \frac{5}{2} - 2q - 7q^{4} - \frac{5}{2}q^{5} -
2q^{8}+O(q^{9}).$$
We note that in most cases this exception will not actually affect the integrality of the Zagier lift due to theorem 6.1 of Alfes \cite{Alfes}, which describes certain conditions under which the image of the lift must be $2$-integral.  Computationally, however, we have found several instances where basis elements containing the single powers of 2 in the denominator do not meet the conditions of Alfes' theorem, and thus they may actually occur in the image of the lift.  This is a topic we wish to study further.\\
\end{remark}

We will prove that this finite calculation of the Fourier expansions of these basis elements suffices to show that all the Fourier coefficients of these basis elements are integral (or half-integral in the case of the exception).  For weights with $s$ odd, we multiply each $f_{k,m}^{(4p)}$ by the modular theta function $\theta(z)$.  The function $\theta(z)$ is a weight $1/2$ holomorphic modular form which is non-vanishing on the upper half plane and holomorphic at the cusps of $\Gamma_0(4)$.  The resulting product will be a holomorphic modular form of even integer weight of level $4p$.  We then examine the resulting Fourier expansions and calculate using the Sturm bound that they are known holomorphic modular forms of even integral weight which have integer coefficients and a leading coefficient of 1.  This shows that the original $f_{k,m}^{(4p)}$ also has integral Fourier coefficients.  If $s$ is even, then we repeat the same process, but multiply by $\theta(z)^3$ instead.\\

To construct a basis of forms of weight $k+6$ in level $12$ we multiply the form $f_{k,m_0}^{(12)}$ of weight $k$ with the highest exponent in the leading term by the form $C_{6,3}(4z)\Phi_3(4z)$, which is holomorphic at the cusp $(0)$ and its related cusps and vanishes with order 8 at the cusp $(\infty)$ and its related cusps.  We note that multiplying $C_{6,3}(4z)$ by $\Phi_3(4z)$ maximizes the order of vanishing at the cusp $(\infty)$ and cancels the zeros at the cusp $(0)$.  We do the same for the form $f_{k,m_1}^{(12)}$ with the second highest exponent in the leading term.  One of these exponents will be $0\pmod 4$ and the other will be $(-1)^s\pmod 4$.  We then multiply each of these forms by successive powers of $\Psi_3(4z)$, which has a leading term of $q^{-4}$ in its Fourier expansion, to decrease the leading exponent until we obtain a spanning set of forms.  We then row reduce to get basis elements with the largest possible gap in the Fourier expansion.  In level $20$ we do the same, but multiply by $C_{4,5}(4z)\Phi_5(4z)$, then by successive powers of $\Psi_5(4z)$.  In level $28$ we multiply by $C_{6,7}(4z)\Phi_7(4z)$ and then by powers of $\Psi_7(4z)$, and in level $52$ we multiply by $C_{4,13}(4z)\Phi_{13}(4z)$ and then by powers of $\Psi_{13}(4z)$.\\

Using these holomorphic forms $f_{k,m}^{(4p)}$ as a starting point, we form the spaces $M_{s+1/2}^{\sharp +}(4p)$.  Note that each $f_{k,m}^{(4p)}$ has non-zero coefficients only for exponents which are congruent to $0,(-1)^{s}\pmod{4}$.  Multiplying one of these basis elements by $\Psi_p(4z)$ will create a form with leading term 4 powers lower which is still in the plus space.  Thus we can get a spanning set of forms with arbitrarily large negative exponents in the leading term by multiplying by successive powers of $\Psi_p(4z)$ and row reducing.\\

We get bases of negative weight $s+1/2-6$ forms in levels $12,20,28$ and $52$ by dividing the bases of weight $s+1/2$ by the forms $C_6(4z)\Phi_3(4z)$, $C_{4,5}(4z)\Phi_5(4z)$, $C_{6,7}(4z)\Phi_7(4z)$, and $C_{4,13}(4z)\Phi_{13}(4z)$ respectively and then row reducing.  This allows us to create bases of modular forms with integer Fourier coefficients for any half integral weight with arbitrarily large negative leading exponent.
\end{proof}

We also note that we can construct a subspace of $M_{s+1/2}^{\sharp +}$ where each element vanishes at the cusp $(0)$ and at its related cusps $(\frac{1}{2})$ and $(\frac{1}{4})$.  To do this, we use the same procedure as above, except we multiply each form by an extra copy of $\Psi_p(4z)$, which vanishes at the cusp $(0)$, before row reducing.  We use these bases in the proof of our main theorem; they are analogues of integer weight bases as described in \cite{Garthwaite}.

\section{Duality of Basis Elements}\label{sec4}
In the proof of the main theorem we make repeated use of duality between coefficients of the basis elements $f_{k,m}^{(4p)}$.
Duality in the integral weight case for odd prime levels of genus 0 is already well understood.  Garthwaite and the second author describe this in \cite{Garthwaite}.  El-Guindy derives similar coefficient duality results in \cite{El} by writing down generating functions for basis elements.\\

We describe some notation for the integral weight case.  Let the set $\{f_{2s,m}^{(p)}\}_{m=m_0}^\infty$ for $s\in \Z$ be a row reduced basis as in section \ref{sec3} for $M_{2s}^{\sharp}(p)$ and let $\{g_{2-2s,l}^{(p)}\}_{l=l_0}^\infty$ be a row reduced basis for the subspace of $M_{2-2s}^{\sharp}(p)$ which vanishes at the cusp $(0)$.  Write
$$f_{2s,m}^{(p)}(z)=q^{-m}+\sum_{n=n_0}^\infty a_{2s}^{(p)}(m,n)q^n$$ and
$$g_{2-2s,l}^{(p)}(z)=q^{-l}+\sum_{n=n_0}^\infty b_{2-2s}^{(p)}(m,n)q^n.$$ From \cite{Garthwaite} we have the following theorem.

\begin{theorem}\label{theorem:dualtiyintegral}
For the bases given above,
$$a_{2s}^{(p)}(m,l)=-b_{2-2s}^{(p)}(l,m).$$
\end{theorem}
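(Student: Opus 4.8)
The plan is to prove the duality $a_{2s}^{(p)}(m,l) = -b_{2-2s}^{(p)}(l,m)$ by the standard residue argument: form the product of two basis elements of complementary weights, multiply by $\frac{dq}{q}$ to obtain a meromorphic differential on the modular curve $X_0(p)$, and use the fact that the sum of the residues of a meromorphic differential is zero. Concretely, consider the weight-$2$ meromorphic modular form $h(z) = f_{2s,m}^{(p)}(z)\, g_{2-2s,l}^{(p)}(z)$ on $\Gamma_0(p)$, and the associated differential $\omega = h(z)\, \frac{dq}{q} = 2\pi i\, h(z)\, dz$. The genus-$0$ hypothesis on $p$ is what makes the argument go through cleanly: $X_0(p)$ has only the two cusps $(\infty)$ and $(0)$, so the only possible poles of $\omega$ are at these two points (both $f_{2s,m}^{(p)}$ and $g_{2-2s,l}^{(p)}$ are holomorphic on the upper half-plane and at all other cusps, of which there are none).

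First I would compute the residue of $\omega$ at the cusp $(\infty)$. Writing $f_{2s,m}^{(p)}(z) = q^{-m} + \sum_{n\ge n_0} a_{2s}^{(p)}(m,n) q^n$ and $g_{2-2s,l}^{(p)}(z) = q^{-l} + \sum_{n\ge n_0} b_{2-2s}^{(p)}(l,n) q^n$, the product $h(z)$ is a Laurent series in $q$, and the residue of $h(z)\,\frac{dq}{q}$ at $q=0$ is the constant term (the coefficient of $q^0$) of $h$. That coefficient is $a_{2s}^{(p)}(m,l) + b_{2-2s}^{(p)}(l,m) + \sum(\text{cross terms})$; because the bases are row-reduced so that $g_{2-2s,l}^{(p)}$ vanishes at the cusp $(0)$, and because of the gap structure forcing the secondary coefficients to have non-overlapping support with the needed terms, the cross terms vanish, leaving exactly $a_{2s}^{(p)}(m,l) + b_{2-2s}^{(p)}(l,m)$. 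Second I would show the residue at the cusp $(0)$ is zero: this is precisely where the choice of $g_{2-2s,l}^{(p)}$ as a basis element of the \emph{subspace vanishing at $(0)$} is used — the expansion of $h$ at $(0)$ has no constant term, since $g_{2-2s,l}^{(p)}$ contributes a strictly positive order of vanishing there while $f_{2s,m}^{(p)}$ is merely holomorphic, so their product vanishes at $(0)$ and the residue of $\omega$ there is $0$. Summing the two residues to zero gives $a_{2s}^{(p)}(m,l) + b_{2-2s}^{(p)}(l,m) = 0$, which is the claim.

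The main obstacle is the bookkeeping at the cusp $(\infty)$: one must verify carefully that the only surviving contribution to the constant term of the product is the pairing $a_{2s}^{(p)}(m,l) + b_{2-2s}^{(p)}(l,m)$, i.e. that the maximal-gap normalization of the row-reduced bases (the choice of $n_0$ in Theorem~\ref{theorem:bases} and its integer-weight analogue from \cite{Garthwaite}) genuinely forces all other products $a_{2s}^{(p)}(m,n)\,b_{2-2s}^{(p)}(l,-n)$ with $n \neq l$ (equivalently $-n \neq m$) to vanish. This is a matter of tracking which exponents can appear in each basis element relative to its leading exponent, and it is the one place where the precise structure of the canonical bases, rather than just their existence, enters. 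A secondary minor point is confirming that the differential $\omega$ has no pole at the elliptic points of $X_0(p)$ — this is automatic since $h$ is a genuine (meromorphic but holomorphic-on-$\mathbb{H}$) weight-$2$ form, so $h(z)\,dz$ is a well-defined differential there — but it should be noted. I would also remark that this argument is the exact half-integral-weight-free analogue of the residue computations in \cite{Jenkins} and \cite{Garthwaite}, so the write-up can be kept brief by citing that this is a routine application of the residue theorem once the cusp structure of $X_0(p)$ is in hand.
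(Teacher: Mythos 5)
Your proposal is correct and follows essentially the same route as the paper: the paper defers this integral-weight statement to \cite{Garthwaite}, but its own proof of the half-integral analogue (Theorem \ref{theorem:dualityhalfintegral}) runs on exactly your blueprint --- the product of complementary-weight basis elements is a weight-2 weakly holomorphic form vanishing at the cusp $(0)$, its constant term therefore vanishes, and the maximal-gap row reduction leaves only the two terms $a_{2s}^{(p)}(m,l)+b_{2-2s}^{(p)}(l,m)$ in that constant term. The only cosmetic differences are that you invoke the residue theorem on $X_0(p)$ directly where the paper obtains the vanishing of the constant term by subtracting the derivative of a polynomial in the Hauptmodul and using that $M_2(p)$ is one-dimensional with a generator not vanishing at $(0)$, and that the two-cusp structure of $X_0(p)$ you credit to genus zero in fact holds for every prime --- genus zero enters instead through the gap structure of the bases.
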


A similar duality holds for half integral weights, which we will prove in this section.  Let the set $\{f_{s+1/2,m}^{(4p)}\}_{m=m_0}^\infty$ for $s\in \Z$ be a row reduced basis as in section \ref{sec3} for $M^{\sharp +}_{s+1/2}(4p)$ and let $\{g_{3/2-s,l}^{(4p)}\}_{l=l_0}^\infty$ be a row reduced basis for the subspace of $M^{\sharp +}_{3/2-s}(4p)$ which vanishes at the cusp $(0)$ and its related cusps.  As above, we write
$$f_{s+1/2,m}^{(4p)}(z)=q^{-m}+\sum_{n=n_0}^\infty a_{s+1/2}^{(4p)}(m,n)q^n$$ and
$$g_{3/2-s,l}^{(4p)}(z)=q^{-l}+\sum_{n=n_0}^\infty b_{3/2-s}^{(4p)}(m,n)q^n.$$ We note that we use this notation to describe the coefficients of these basis elements throughout the remainder of the paper.  We have the following theorem.

\begin{theorem}\label{theorem:dualityhalfintegral}
For the bases given above,
$$a_{s+1/2}^{(4p)}(m,l)=-b_{3/2-s}^{(4p)}(l,m).$$
\end{theorem}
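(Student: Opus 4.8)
The plan is to prove the duality by the standard "pairing of dual weight forms has zero residue" argument, exactly as in the integer weight case of Theorem~\ref{theorem:dualtiyintegral}. Given $f_{s+1/2,m}^{(4p)}\in M^{\sharp+}_{s+1/2}(4p)$ and $g_{3/2-s,l}^{(4p)}\in M^{\sharp+}_{3/2-s}(4p)$ (the latter chosen from the subspace vanishing at $(0)$ and its related cusps), the product
$$h(z)=f_{s+1/2,m}^{(4p)}(z)\,g_{3/2-s,l}^{(4p)}(z)$$
is a weakly holomorphic modular form of weight $2$ on $\Gamma_0(4p)$. Hence $h(z)\,dz$ is a meromorphic differential on the modular curve $X_0(4p)$, and the sum of its residues at all cusps is zero. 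The strategy is to compute this residue sum: the residue at a cusp is (up to the width of the cusp and a nonzero normalizing constant) the constant term of the local $q$-expansion of $h$ there, and we must check that every cusp other than $(\infty)$ contributes nothing.

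First I would set up the local expansions. At $(\infty)$, multiplying $q^{-m}+\sum_{n\ge n_0}a_{s+1/2}^{(4p)}(m,n)q^n$ by $q^{-l}+\sum_{n\ge n_0}b_{3/2-s}^{(4p)}(l,n)q^n$ and extracting the coefficient of $q^0$ gives
$$a_{s+1/2}^{(4p)}(m,l)+b_{3/2-s}^{(4p)}(l,m)+(\text{cross terms that vanish because the basis elements have a maximal gap}).$$
The point of row reducing to the maximal-gap normalization in Section~\ref{sec3} and Theorem~\ref{theorem:bases} is precisely that the only pair of exponents that can sum to zero is $(-m)+(-l)$ paired against its complementary coefficients; any other product $q^{-m}\cdot b(l,n)q^n$ or $a(m,n)q^n\cdot q^{-l}$ has exponent bounded away from $0$ by the gap, and the tail-times-tail terms are manifestly positive. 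So the constant term at $(\infty)$ is $a_{s+1/2}^{(4p)}(m,l)+b_{3/2-s}^{(4p)}(l,m)$, and this is what must equal zero. Next I would handle the remaining cusps. By the grouping established in Section~\ref{sec2}, the cusps of $\Gamma_0(4p)$ split into the block $(\infty),(\tfrac1p),(\tfrac1{2p})$ and the block $(0),(\tfrac12),(\tfrac14)$. Since $g_{3/2-s,l}^{(4p)}$ was chosen to vanish at $(0)$ and hence at $(\tfrac12),(\tfrac14)$, while $f_{s+1/2,m}^{(4p)}\in M^{\sharp+}$ is holomorphic (with at worst a $q^0$ term) there, the product $h$ vanishes at all three cusps of the $(0)$-block, contributing no residue. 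At the cusps $(\tfrac1p),(\tfrac1{2p})$ in the $(\infty)$-block, the plus-space structure relates the local expansion of each factor to its expansion at $(\infty)$ twisted by a character, so the relevant constant term is again built from the same Fourier coefficients; I would show these contributions cancel in pairs (or vanish), using the explicit automorphy factors from Section~\ref{sec2} and the fact that $h$ has integer weight $2$ so the metaplectic subtleties disappear in the product.

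The main obstacle is the bookkeeping at the cusps $(\tfrac1p)$ and $(\tfrac1{2p})$: one must verify that when the two plus-space factors are transported to these cusps, the product's constant term either vanishes identically or cancels against the other cusp in the block, rather than introducing a spurious nonzero term that would break the clean duality. This requires tracking the character twists and width factors carefully; I expect it to reduce, after the dust settles, to the same computation that underlies Kohnen's projection-operator identities quoted in Section~\ref{sec2}, so the cleanest route may be to phrase the residue sum directly in terms of $h|pr$-type expansions rather than cusp-by-cusp. Once all non-$(\infty)$ residues are shown to be zero, the residue theorem forces $a_{s+1/2}^{(4p)}(m,l)+b_{3/2-s}^{(4p)}(l,m)=0$, which is the claim.
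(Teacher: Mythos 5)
Your overall framework---pair the two dual-weight forms into a weight $2$ form $h=f_{s+1/2,m}^{(4p)}\,g_{3/2-s,l}^{(4p)}$ and show its constant term vanishes---is the same as the paper's, and your extraction of $a_{s+1/2}^{(4p)}(m,l)+b_{3/2-s}^{(4p)}(l,m)$ from the constant term at $(\infty)$, as well as your disposal of the cusps $(0),(\tfrac12),(\tfrac14)$ using the vanishing of $g$ there, both match what the paper does. But the step you yourself flag as ``the main obstacle'' is a genuine gap, not bookkeeping. Since pole orders at $(\infty),(\tfrac1p),(\tfrac1{2p})$ are tied together for forms in $M^{\sharp+}$, \emph{both} factors have honest poles at $(\tfrac1p)$ and $(\tfrac1{2p})$, so $h$ has a priori nonzero residues there. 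The local expansions at those cusps are character twists of the expansion at $(\infty)$, so the residues are sums of the shape $\sum_n \chi(n)\,a_{s+1/2}^{(4p)}(m,n)\,b_{3/2-s}^{(4p)}(n,m)$ for nontrivial $\chi$; there is no a priori reason these vanish individually or cancel in pairs, and proving that they combine with the $(\infty)$ residue to isolate the untwisted constant term is essentially the whole content of the theorem. As written, your residue-theorem argument establishes only that a certain linear combination of twisted sums vanishes, which does not yield $a_{s+1/2}^{(4p)}(m,l)=-b_{3/2-s}^{(4p)}(l,m)$.

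The paper sidesteps this entirely by a different mechanism, which you would need to import (or replace with an honest computation of the twisted residues). Because $f$ and $g$ satisfy complementary plus-space conditions, $fg$ has no Fourier coefficients on exponents $\equiv 2\pmod 4$; hence $fg|U_2\in M_2^!(2p)$ by Atkin--Lehner and, being supported on even exponents, $fg|U_4\in M_2^!(p)$. The operator $U_4$ preserves the constant term, and an explicit matrix computation shows it also preserves vanishing at the cusp $(0)$. On $\Gamma_0(p)$ there are only two cusps, and every element of $M_2^!(p)$ is a derivative of a polynomial in the Hauptmodul plus an element of the one-dimensional space $M_2(p)$, whose generator does not vanish at $(0)$; since $fg|U_4$ does vanish there and derivatives have no constant term, the constant term of $fg|U_4$, hence of $fg$, is zero. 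In effect the $U_4$ descent collapses the three cusps of the $(\infty)$-block into a single cusp where the one-dimensionality of $M_2(p)$ finishes the argument---this is the ingredient your proposal is missing. (A smaller point: your claim that the cross terms in the constant term at $(\infty)$ vanish ``because of the maximal gap'' also needs the paper's observation, from the explicit bases and dimension counts, that the leading exponent of each basis element in one weight equals the gap bound in the dual weight, so that exactly the two terms $a_{s+1/2}^{(4p)}(m,l)$ and $b_{3/2-s}^{(4p)}(l,m)$ survive.)
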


\begin{proof}
Given forms $f\in M^{\sharp +}_{s+1/2}(4p)$ and $g\in M^{\sharp +}_{3/2-s}(4p)$ where $g$ vanishes at the cusp $(0)$, the form $fg$ is in the space $ M_2^\sharp(4p)$.  We will show that the constant term of $fg$ vanishes, then apply this to the basis elements $f_{s+1/2,m}^{(4p)}(z)$ and $g_{3/2-s,l}^{(4p)}(z)$ to prove the duality.  The Fourier coefficients of $fg$ vanish on exponents congruent to $2\pmod 4$.  Let the $U_m$ operator be the standard operator which takes a form $f=\sum^{\infty}_{n=n_{0}} a_n q^n$ to $f|U_m=\sum_{n=n_0}^{\infty} a_{nm}q^n$.  Then $fg|U_2\in M_2^!(2p)$, by \cite{Atkin}.  Then, since $fg|U_2$ is supported only on even exponents, $fg|U_4$ is actually in $M_2^!(p)$.  The holomorphic space $M_2(p)$ is one dimensional and its only form does not vanish at the cusp $(0)$.  By the definition of $g$, the form $fg$ vanishes at the cusp $(0)$; we will show that $fg|U_4$ also vanishes at the cusp $(0)$.  The operator $U_4$ may be expressed as the sum of matrices acting on $fg$,
$$fg|U_4=\frac{1}{4}\sum_{j=0}^3 fg\left(\frac{z+j}{4}\right).$$  Acting on this with $|\eta^{(0)}$ gives $$fg|U_4|\eta^{(0)}=\frac{1}{4}z^{-2}\sum_{j=0}^3 f\left(\left(\begin{smallmatrix}j&b\\4&d\end{smallmatrix}\right)\left(\begin{smallmatrix}1&-j\\0&4\end{smallmatrix}\right)\right),$$ where $b,d$ are simply the correct integers which cause the first matrix to be in $SL_2(\Z)$.  Some algebraic manipulation shows that taking the limit of this sum as $z$ goes to $i\infty$ is equal to (up to a few constants) the sum of $fg$ evaluated at several different cusps, each of which is equivalent to $(0)$.  Since the form $fg$ vanishes at the cusp $(0)$, the above sum also vanishes.  Hence $fg|U_4$ vanishes at the cusp $(0)$.\\

Finally, note that derivatives of forms in $M_0^{!}(p)$ are in $M_2^!(p)$.  Thus we can eliminate the principal part of $fg|U_4$ by subtracting a derivative of a suitable polynomial in $\Psi_p(z)$, a Hauptmodul of level $p$, to get a form which is holomorphic and vanishes at the cusp $(0)$ and thus is identically 0.  Since this derivative has no constant term, we conclude that $fg|U_4$ also has no constant term and thus $fg$ has no constant term.\\

Applying this to the basis elements $f_{s+1/2,m_i}^{(4p)}$ and $g_{3/2-s,l_j}^{(4p)}$, the resulting product $f_{s+1/2,m_i}^{(4p)}\cdot g_{3/2-s,l_j}^{(4p)}$ is in $M^\sharp_2(\Gamma_0(4p))$ and vanishes at the cusp $(0)$, and thus its constant term vanishes.  This constant term is $$\sum_{n=-\infty}^\infty a_{s+1/2}^{(4p)}(m,n)b_{3/2-s}^{(4p)}(n,m)=0.$$  Since each basis element has only finitely many negative exponents, this sum is actually finite.  If we examine the explicitly constructed bases as described in section 2, together with dimension formulas for each of the spaces, we see that for each weight $k$, the form with the highest power in the leading term $$f_{s+1/2,m_1}^{(4p)}=q^{-m_1}+\sum_{n=M}^\infty a_nq^n$$ and the form with the highest power in the leading term $$g_{3/2-s,l_1}^{(4p)}=q^{-l_1}+\sum_{n=L}^\infty b_nq^n$$ have $m_1=L$ and $l_1=M$.  This means that if we multiply any two forms together from these bases, there will always be exactly two terms which add up to give the resulting constant coefficient, namely $$a_{s+1/2}^{(4p)}(m,l)+b_{3/2-s}^{(4p)}(l,m)=0.$$  This gives the desired duality.
\end{proof}

\section{Proof of Theorem 1.2}\label{sec5}
We now express the Zagier lift of an arbitrary modular form $f\in M^\sharp_{2-2s}(p)$ in terms of the basis elements $f_{k,m}^{(N)}$ for $k=s+1/2$ or $k=3/2-s$, depending on the parity of $s$.  Since these basis elements have integral Fourier coefficients, this will allow us to prove Theorem \ref{theorem:ours}.  For a genus 0 odd prime $p$ and integer $n$, we will denote $N(n)=\tfrac{4p}{\gcd(n,p)}$ to ease notation, where we note the implicit dependence on $p$, which will always be obvious.  We also use the notation $\chi_1^{(p)}(n)$ to denote the trivial level $p$ character which vanishes if $p|n$ and equals $1$ otherwise.
\begin{theorem}\label{maintheorem}
Let $f\in M^\sharp_{2-2s}(p)$ and let $s\geq 2$ be an integer.  Suppose that $D$ is a fundamental discriminant with $(-1)^sD>0$.  Then the $D$th Zagier lift of $f(z)=\sum a(n)q^n$ is given by
\begin{equation}\label{Fequation}
\mathfrak{Z}_Df=\sum_{m>0} a(-m)\sum_{n|m} \chi_D(n) n^{s-1}f_{3/2-s,\frac{m^2|D|}{n^2}}^{\left (N(n)\right )}.
\end{equation}
\end{theorem}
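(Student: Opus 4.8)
The plan is to show directly that the $q$-expansion of the right-hand side of \eqref{Fequation} --- call it $G$, a form which is visibly weakly holomorphic of weight $3/2-s$ and level $4p$ --- agrees with the series \eqref{zaglift} defining $\mathfrak Z_D f$, i.e.\ that the principal part of $G$ at $(\infty)$ is the displayed one and that the $|d|$-th Fourier coefficient of $G$ equals $\Tr^*_{d,D}(f)$ for all $d$ with $dD<0$. Since $s\ge 2$ forces $3/2-s\le -1/2<0$ and hence $M_{3/2-s}(4p)=0$, this both proves the modularity assertion of Theorem \ref{theorem:ours} and identifies $\mathfrak Z_D f$ uniquely with $G$ (the constant term $C$ of \eqref{zaglift} then being, by construction, the constant term of $G$).

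First I would observe that $\Tr^*_{d,D}$ and $G$ are both linear in $f$; since $2-2s<0$ gives $M_{2-2s}(p)=0$, every $f\in M^\sharp_{2-2s}(p)$ is the finite combination of canonical basis elements $f^{(p)}_{2-2s,m}$ with the same principal part, so it suffices to prove \eqref{Fequation} for $f=f^{(p)}_{2-2s,m}$. For such $f$ we have $\hat s=s$ since $(-1)^sD>0$, so \eqref{zaglift} prescribes the principal part $\sum_{n\mid m}\chi_D(n)\,n^{s-1}q^{-m^2|D|/n^2}$ (together with the analogous terms coming from any further negative coefficients of $f^{(p)}_{2-2s,m}$). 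On the side of $G$, each summand with $p\nmid n$ is $f^{(4p)}_{3/2-s,\,m^2|D|/n^2}\in M^{\sharp+}_{3/2-s}(4p)$ by Theorem \ref{theorem:bases}, while each summand with $p\mid n$ is the level-$4$ form $f^{(4)}_{3/2-s,\,m^2|D|/n^2}$ of Duke and the second author, which a short computation tracking the cusps of $\Gamma_0(4)$ inside $\Gamma_0(4p)$ (in the style of \S\ref{sec2}) shows, when regarded on $\Gamma_0(4p)$, still has poles only at $(\infty),(\tfrac1p),(\tfrac1{2p})$ and vanishes at $(0),(\tfrac12),(\tfrac14)$. Hence $G\in M^{\sharp+}_{3/2-s}(4p)$ and $G$ vanishes at $(0),(\tfrac12),(\tfrac14)$.

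The heart of the proof is the evaluation of the twisted traces $\Tr^*_{d,D}(f^{(p)}_{2-2s,m})$. I would carry this out by first invoking the trace formula for CM values of $\partial^{s-1}$ of weight-$(2-2s)$ weakly holomorphic forms in terms of Fourier coefficients of half-integral weight Poincar\'e series --- the unfolding computation of Bringmann and Ono \cite{OnoArithmetic} and of Miller and Pixton \cite{Miller}, which in the present setting produces the positive-weight Poincar\'e series of Theorem \ref{theorem:poincare} --- and then using the half-integral weight duality of Theorem \ref{theorem:dualityhalfintegral} (with its integer-weight analogue Theorem \ref{theorem:dualtiyintegral} on the weight-$(2-2s)$ side) to transport these coefficients into the weight-$(3/2-s)$ canonical basis. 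The action of the genus character $\chi_D$ on the set of CM forms of discriminant $dD$ produces the divisor sum $\sum_{n\mid m}\chi_D(n)n^{s-1}$, while the fact that a CM point of a $\Gamma_0(p)$-form with $p\mid(a,b,c)$ lies on $\Gamma_0(1)$ produces the level drop to $N(n)=4p/\gcd(n,p)$, the upshot being the identity
$$\sum_{dD<0}\Tr^*_{d,D}\bigl(f^{(p)}_{2-2s,m}\bigr)\,q^{|d|}\ =\ \sum_{n\mid m}\chi_D(n)\,n^{s-1}\Bigl(f^{(N(n))}_{3/2-s,\,m^2|D|/n^2}\Bigr)^{+},$$
where $(\,\cdot\,)^{+}$ denotes the part of strictly positive $q$-order. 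I expect this to be the principal obstacle: the unfolding is standard in outline, but one must keep careful track of the six cusps of $\Gamma_0(4p)$, the action of $\chi_D$, the change of level, and all of the normalizing constants (the sign $(-1)^{\lfloor(\hat s-1)/2\rfloor}$ and the powers of $|d|$ and $|D|$), and it is exactly here that the genus-$0$ hypothesis on $p$ enters; a further delicate point is that the same comparison forces the ``intermediate'' negative Fourier terms of the individual basis elements $f^{(N(n))}_{3/2-s,\,m^2|D|/n^2}$ to cancel in these particular combinations, so that the principal part of $G$ is the clean one prescribed by \eqref{zaglift}.

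Granting the trace computation, the principal part and all positive-index Fourier coefficients of $G$ match those of \eqref{zaglift}, and since $G$ is modular this forces $G=\mathfrak Z_D f$; by linearity in $f$ this proves Theorem \ref{maintheorem}. In particular $\mathfrak Z_D f\in M^{!+}_{3/2-s}(4p)$ --- the first assertion of Theorem \ref{theorem:ours} in the case $(-1)^sD>0$ --- and the integrality statement there then follows at once from the integrality of the basis elements in Theorem \ref{theorem:bases}.
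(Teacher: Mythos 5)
Your outline follows the same skeleton as the paper's proof (linearity over the canonical basis of $M^\sharp_{2-2s}(p)$, the Miller--Pixton trace formula, duality, separate comparison of principal part and positive coefficients), but there is a genuine gap at precisely the step you defer as ``the principal obstacle.'' The Miller--Pixton formula expresses $\Tr^*_{d,D}(f^{(p)}_{2-2s,m})$ in terms of coefficients $p^{(N(n))}_{s+1/2}(-|d|,\cdot)$ of the \emph{Poincar\'e series} $F^{(N(n))}_{s+1/2,-|d|}$, whereas Theorem \ref{theorem:dualityhalfintegral} speaks about the \emph{canonical basis elements} $g^{(N(n))}_{s+1/2,|d|}$. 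These two objects differ by a cusp form $C(z)=F^{(4p)}_{s+1/2,-|d|}-g^{(4p)}_{s+1/2,|d|}$ (and an analogous $C'(z)$ in level $4$), and one must prove that this cusp form contributes nothing to the trace before duality can be applied. That is the actual content of the paper's argument: the contribution of $C(z)$ is recognized, via Kohnen's twisted Shimura lift \eqref{shimdef}, as the constant term of $\mathfrak{S}^{(4p)}_D(C)\cdot f^{(p)}_{2-2s,m}$, a weight-$2$ form on $\Gamma_0(p)$ vanishing at the cusp $(0)$, whose constant term must vanish because $M_2(p)$ is one-dimensional with generator not vanishing at $(0)$ and because principal parts can be absorbed by derivatives of polynomials in the Hauptmodul. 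The identical device (constant term of $\mathfrak{S}_D(g_{s+1/2,-h})\cdot f$) is what forces $S=0$ in the principal-part comparison, i.e.\ the cancellation of the ``intermediate'' negative terms that you note is needed but do not establish. Without this Shimura-lift/constant-term mechanism, neither of the two cancellations on which the theorem rests is proved.

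A secondary but concrete error: your displayed identity $\sum_{dD<0}\Tr^*_{d,D}(f^{(p)}_{2-2s,m})q^{|d|}=\sum_{n\mid m}\chi_D(n)n^{s-1}\bigl(f^{(N(n))}_{3/2-s,\,m^2|D|/n^2}\bigr)^{+}$ is false for an individual basis element. In level $p>1$ the form $f^{(p)}_{2-2s,m}$ carries further negative coefficients $a^{(p)}_{2-2s}(m,-j)$ for $1\le j\le l$ (you acknowledge them parenthetically for the principal part but then drop them), and the trace formula acquires the corresponding correction $\sum_{j=1}^{l}a^{(p)}_{2-2s}(m,-j)\sum_{h\mid j}\chi_D(h)h^{s-1}(\cdots)$. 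These corrections disappear only after summing over $m$ against the coefficients $a(-m)$ of the full form $f$, using $-\sum_{m}a(-m)b^{(p)}_{2s}(-j,m)=a(-j)$ --- itself another vanishing-constant-term argument, applied to $f\cdot g^{(p)}_{2s,-j}\in M_2^!(p)$. So \eqref{Fequation} for $f=f^{(p)}_{2-2s,m}$ must include the terms indexed by all of its negative exponents, not just $q^{-m}$; the clean single-sum identity you assert holds only after the recombination over $m$.
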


\noindent We will denote the right hand side of (\ref{Fequation}) by $F$.\\

\begin{proof}

For this proof, we use the integer weight bases $\{f_{2s,m}^{(p)}\}_{m=m_0}^\infty$ and $\{g_{2-2s,m}^{(p)}\}_{m=l_0}^\infty$.  Denote $l=l_0$.  We also use the half integer weight bases $\{f_{s+1/2,m}^{(4p)}\}_{m=j_0}^\infty$ and $\{g_{3/2-s,m}^{(4p)}\}_{m=k_0}^\infty$ described in sections \ref{sec3} and \ref{sec4}, the half integral weight bases $\{f_{s+1/2,m}^{(4)}\}_{m=n_0}^\infty$ described in \cite{Jenkins}, as well as the Poincar\'e series $F_{k,m}^{(N)}$ described in section 2.  We will also repeatedly use the modified Shimura lift as defined by Kohnen in \cite{Kohnen}, which we give here.  Let the level $4p$ modified Shimura lift twisted by the fundamental discriminant $D$ act on a form $f(z)=\sum_{n=1}^\infty a(n)q^n \in S^{+}_{s+1/2}(4N)$ by
\begin{equation}\label{shimdef}
\mathfrak{S}_D^{(4p)}(f(z)) = \sum_{n=1}^{\infty} \left (\sum_{d|n} \chi_1^{(p)}(n)\chi_D(n)d^{s-1}a\left (\frac{n^2|D|}{d^2} \right )\right) q^n.
\end{equation}
Kohnen shows that the image of the lift $\mathfrak{S}_D^{(4p)}(f(z))$ is a cusp form in $S_{2s}(p)$.  We will complete the proof by comparing the positive powers and the principal parts of $\mathfrak{Z}_D f$ and $F$ separately.  We begin by comparing the positive powers.\\

By combining theorems (1.1) and (1.2) of Miller and Pixton \cite{Miller}, for $m\geq l$, we can write the traces as sums of coefficients of the Poincar\'e series from section \ref{sec2} as
\begin{align*}
-\Tr^*_{d,D}(f_{2-2s,m}^{(p)})=&\sum_{n|m}\chi_D(n)n^{s-1}p_{s+1/2}^{(N(n))}\left (-|d|,\tfrac{m^2|D|}{n^2}\right )\\
&+\sum_{j=1}^l a_{2-2s}^{(p)}(m,-j)\sum_{h|j}\chi_D(h)h^{s-1}p_{s+1/2}^{(N(n))}\left (-|d|,\tfrac{j^2|D|}{h^2}\right ).
\end{align*}
We will rewrite this by grouping each of the coefficients of the Poincar\'e series together according to level, to obtain
\begin{align*}
-\Tr^*_{d,D}&(f_{2-2s,m}^{(p)})=\sum_{n|m}\chi_1^{(p)}(n)\chi_D(n)n^{s-1}p_{s+1/2}^{(4p)}\left (-|d|,\tfrac{m^2|D|}{n^2}\right )\\
&+\sum_{j=1}^l a_{2-2s}^{(p)}(m,-j)\sum_{h|j}\chi_1^{(p)}(n)\chi_D(h)h^{s-1}p_{s+1/2}^{(4p)}\left (-|d|,\tfrac{j^2|D|}{h^2}\right )\\
&+\chi_D(p)p^{s-1}\sum_{n|(m/p)}\chi_D(n)n^{s-1}\left ( p_{s+1/2}^{(4)}\left (-|d|,\tfrac{(m/p)^2|D|}{n^2}\right ) \right )\\
&+\chi_D(p)p^{s-1}\sum_{j=1}^l a_{2-2s}^{(p)}(m,-j)\sum_{h|(j/p)}\chi_D(h)h^{s-1}\left ( p_{s+1/2}^{(4)}\left (-|d|,\tfrac{(j/p)^2|D|}{n^2}\right )\right ),\\
\end{align*}
where the terms involving $(m/p)$ and $(j/p)$ are taken to be $0$ if $p\nmid m$ or $p\nmid j$ respectively.\\

We will denote $C(z)=F_{s+1/2,-|d|}^{(4p)}(z)-g_{s+1/2,|d|}^{(4p)}(z)$.  We know from section \ref{sec2} that $C(z)$ is a modular form which vanishes at the cusps $(\infty)$ and $(0)$ and is in the plus space, thus is a cusp form in $S_{s+1/2}(4p)$.  Denote $C'(z) = \left (F_{s+1/2,-|d|}^{(4)}(z)-f_{s+1/2,|d|}^{(4)}(z)\right )$, which is a cusp form in $S_{s+1/2}(4)$.  We will denote
$$C(z)=\sum c(n)q^n \,\,\,\,\text{ and }\,\,\,\, C'(z)=\sum c'(n)q^n.$$
Thus equating coefficients gives
$$p_{s+1/2}^{(4p)}\left (-|d|,\tfrac{j^2|D|}{h^2}\right )=b_{s+1/2}^{(4p)}\left (-|d|,\tfrac{j^2|D|}{h^2}\right ) +c\left (\tfrac{j^2|D|}{h^2}\right)$$
and
$$p_{s+1/2}^{(4)}\left (-|d|,\tfrac{j^2|D|}{h^2}\right )=b_{s+1/2}^{(4)}\left (-|d|,\tfrac{j^2|D|}{h^2}\right )+c'\left (\tfrac{j^2|D|}{h^2}\right ).$$

The contributions to $-\Tr^*_{d,D}(f_{2-2s,m}^{(p)})$ from $C(z)$ and $C'(z)$ are
\begin{equation}\label{eqC}
\begin{aligned}
\sum_{n|m}\chi_1^{(p)}(n)\chi_D(n)n^{s-1}c\left (\tfrac{m^2|D|}{n^2}\right)+\sum_{j=1}^l a_{2-2s}^{(p)}(m,-j)\sum_{h|j}\chi_1^{(p)}(n)\chi_D(h)h^{s-1}c\left (\tfrac{j^2|D|}{h^2}\right)
\end{aligned}
\end{equation}
and
\begin{equation}\label{eqC'}
\begin{aligned}
\sum_{n|(m/p)}\chi_D(n)n^{s-1}&c'\left (\tfrac{(m/p)^2|D|}{n^2}\right)\\
&+\sum_{j=1}^l a_{2-2s}^{(p)}(m,-j/p)\sum_{h|(j/p)}\chi_D(h)h^{s-1}c'\left (\tfrac{(j/p)^2|D|}{h^2}\right).
\end{aligned}
\end{equation}

We note that $\mathfrak{S}_D^{(4p)}(C(z))$, the $D$th Shimura lift of level $4p$ of $C(z),$ and $\mathfrak{S}_D^{(4)}(C'(z)),$  the $D$th Shimura lifts of level $4$ of $C'(z)$, are cusp forms of weight $2s$ and level $p$ and $1$ respectively with the $j$th and $(j/p)$th coefficients equal to
$$\sum_{h|j}\chi_1^{(p)}(n)\chi_D(h)h^{s-1}c\left (\tfrac{j^2|D|}{h^2}\right),$$
$$\sum_{h|(j/p)}\chi_D(h)h^{s-1}c'\left (\tfrac{(j/p)^2|D|}{h^2}\right),$$
respectively.  Thus (\ref{eqC}) and (\ref{eqC'}) may be interpreted as the constant term of $\mathfrak{S}_D^{(4p)}(C(z))\cdot f_{2-2s,m}^{(p)}(z)$ and the constant term of $\mathfrak{S}_D^{(4)}(C'(z))|V_p\cdot f_{2-2s,m}^{(p)}(z)$ respectively, both of which are in $M_2^!(\Gamma_0(p))$. Since $\mathfrak{S}_D^{(4p)}(C(z))\cdot f_{2-2s,m}^{(p)}(z)$ and $\mathfrak{S}_D^{(4)}(C'(z))|V_p\cdot f_{2-2s,m}^{(p)}(z)$ vanish at the cusp $(0)$ and since $M_2(p)$ is one dimensional, we use an argument similar to that in section \ref{sec4} to conclude that $\mathfrak{S}_D^{(4p)}(C(z))\cdot f_{2-2s,m}$ and $\mathfrak{S}_D^{(4p)}(C'(z))|V_p\cdot f_{2-2s,m}$ both have zero constant terms, implying that (\ref{eqC}) and (\ref{eqC'}) are both zero.  Therefore,
\begin{align*}
\Tr^*_{d,D}(f_{2-2s,m}^{(p)})=&\sum_{n|m}\chi_D(n)n^{s-1}b_{s+1/2}^{(N(n))}\left (-|d|,\tfrac{m^2|D|}{n^2}\right )\\
&+\sum_{j=1}^l a_{2-2s}^{(p)}(m,-j)\sum_{h|j}\chi_D(h)h^{s-1}b_{s+1/2}^{(N(n))}\left (-|d|,\tfrac{j^2|D|}{n^2}\right ).\\
\end{align*}

Now by duality, $\Tr^*_{d,D}(f_{2-2s,m}^{(p)})$ is the coefficient of $q^{|d|}$ in the Fourier expansion of
$$\sum_{n|m}\chi_D(n)n^{s-1}f_{3/2-s,\frac{m^2|D|}{n^2}}^{(N(n))}(z)-\sum_{j=1}^l b_{2s}^{(p)}(-j,m)\sum_{h|j} \chi_D(h)h^{s-1}f_{3/2-s,\frac{j^2|D|}{h^2}}^{(N(n))}(z)$$

Recall that we denote the modular form $f \in M^\sharp_{2-2s}(p)$ which we are lifting as $$f=\sum a(n)q^n. $$  Since we can express any such $f$ in terms of the basis elements $\{f_{2-2s,m_i}^{(p)}\}$, we write
$$f = \sum_{m\geq l}a(-m)f_{2-2s,m}^{(p)}.$$
Then, by the linearity of the trace,
$$\Tr^*_{d,D}(f)=\sum_{m\geq l}a(-m)\Tr^*_{d,D}(f_{2-2s,m}^{(p)}).$$
Following the above argument, $\Tr^*_{d,D}(f)$ is the coefficient of $q^{|d|}$ in
$$\sum_{m\geq l}a(-m)\sum_{n|m}\chi_D(n)n^{s-1}f_{3/2-s,\frac{m^2|D|}{n^2}}^{(N(n))}(z)-\sum_{j=1}^l b_{2s}^{(p)}(-j,m)\sum_{h|j} \chi_D(h)h^{s-1}f_{3/2-s,\frac{j^2|D|}{h^2}}^{(N(n))}(z).$$

We then change the order of summation and group together all the terms with equal $j$ to rewrite this as
\begin{equation}\label{reduces}
\begin{aligned}
\sum_{m\geq l}a(-m)\sum_{n|m}&\chi_D(n)n^{s-1}f_{3/2-s,\frac{m^2|D|}{n^2}}^{(N(n))}(z)\\
&-\sum_{j=1}^l\sum_{h|j} \chi_D(h)h^{s-1}f_{3/2-s,\frac{j^2|D|}{h^2}}^{(N(n))}(z) \sum_{m\geq l}a(-m) b_{2s}^{(p)}(-j,m).
\end{aligned}
\end{equation}
The fact that $f(z)\cdot g_{2s,-j}^{(p)}\in M_2^!(p)$ and vanishes at the cusp $(0)$ means that its constant term must vanish.  Thus we get
$$-\sum_{m\geq l}a(-m)b_{2s}^{(p)}(-j,m)=a(-j),$$
from which equation (\ref{reduces}) reduces to
$$\sum_{m>0}a(-m)\sum_{n|m}\chi_D(n)n^{s-1}f_{3/2-s,\frac{m^2|D|}{n^2}}^{(N(n))}(z),$$
which is exactly $F$.\\

Now we will look at the principal parts.  The definition of the basis elements $f_{3/2-s,m}^{(N)}(z)$ show that
$$f_{3/2-s,\frac{m^2|D|}{n^2}}^{(N)}(z)=0$$
as long as $\frac{m^2|D|}{n^2}<C_0$ for some constant $C_0$ which only depends on the weight $3/2-s$ and the level $N$.  We combine this fact with the Fourier expansion
$$ f_{3/2-s,\frac{m^2|D|}{n^2}}^{(N)}(z)= q^{-\frac{m^2|D|}{n^2}}+\sum_h a_{3/2-s}^{(N)}\left (\tfrac{m^2|D|}{n^2},h\right )q^h$$
to express the negative powers of $q$ appearing in the Fourier expansion of $F$.  This is
\begin{equation}\label{S}
\begin{aligned}
\sum_{m>0}a(-m)&\sum_{n|m}\chi_D(n)n^{s-1}q^{\frac{-m^2|D|}{n^2}}\\
&-\sum_{m>0}a(-m)\sum_{n|m,\frac{m^2|D|}{n^2}<C_0}\chi_D(n)n^{s-1}q^{\frac{-m^2|D|}{n^2}}\\
&+\sum_{m,h>0}a(-m)\sum_{n|m}\chi_D(n)n^{s-1}a_{3/2-s}^{(N(n))}\left (\tfrac{m^2|D|}{n^2},-h\right)q^{-h}.
\end{aligned}
\end{equation}

The first line of the above expression (\ref{S}) is exactly what we want for the principal part of $\mathfrak{Z}_Df$.  Thus we must prove that the rest of (\ref{S}) vanishes.  We will call this $S$.  Coefficient duality shows that
$$S=-\sum_{m>0}a(-m)\left[\sum_{n|m,\frac{m^2|D|}{n^2}<C}\chi_D(n)n^{s-1}q^{\frac{-m^2|D|}{n^2}}+\sum_{h>0}\sum_{n|m} \chi_D(n)n^{s-1}b_{s+1/2}^{(N(n))}\left (-h,\tfrac{m^2|D|}{n^2}\right )q^{-h}\right].$$
Again, we rewrite this sum by grouping together coefficients which come from basis elements of the same level to get
\begin{equation}
\begin{aligned}
S=-\sum_{m>0}&a(-m)\Bigg[\sum_{n|m,\frac{m^2|D|}{n^2}<C}\chi_1^{(p)}(n)\chi_D(n)n^{s-1}q^{\frac{-m^2|D|}{n^2}}\\&+\sum_{h>0}\sum_{n|m}\chi_1^{(p)}(n) \chi_D(n)n^{s-1}b_{s+1/2}^{(4p)}\left (-h,\tfrac{m^2|D|}{n^2}\right )q^{-h}\\
&+\chi_D(p)p^{s-1}\sum_{h>0}\sum_{n|(m/p)} \chi_D(n)n^{s-1}\left (b_{s+1/2}^{(4)}\left (-h,\tfrac{(m/p)^2|D|}{n^2}\right )\right )q^{-h}\Bigg].
\end{aligned}
\end{equation}

For any $h>0$, the coefficient of $q^m$ in the $D$th Shimura lift $\mathfrak{S}_D^{(4p)}\left (g_{s+1/2,-h}^{(N)}\right )$ of level $N$ with $4|N$ of the cusp form $g_{s+1/2,-h}^{(N)}$ is given by
$$\sum_{n|m}\chi_{1}^{(N/4)}(n)\chi_D(n)n^{s-1}\left[b_{s+1/2}^{(N)}\left (-h,\tfrac{m^2|D|}{n^2}\right)+\{\text{1 or 0}\}\right].$$

The last term here arises from the initial $q^h$ in the Fourier expansion of $g_{s+1/2,-h}^{(N)}$, since $b_{s+1/2}^{(N)}(-h,h)$ is zero by definition; it equals 1 if $\tfrac{m^2|D|}{n^2}=h$ and 0 otherwise.  Thus we interpret the coefficient of $q^{-h}$ in $S$ for each $h>0$ as the sum of constant terms of $\mathfrak{S}_D^{(4p)}(g_{s+1/2,-h}^{(4p)})\cdot f$ and $\mathfrak{S}_D^{(4)}(g_{s+1/2,-h}^{(4)})|V_p\cdot f$.  Both of these forms are in $\Gamma_2^!(p)$ and vanish at the cusp $(0)$ and thus have vanishing constant terms.  Therefore $S=0$ and we have equality with the principal part of $F$.  The concludes the proof.\end{proof}

We now prove the analogue of (\ref{maintheorem}) for $(-1)^sD<0$.

\begin{theorem}\label{maintheorem2}
Let $f\in M^\sharp_{2-2s}(p)$ and $s\geq 2$ be an integer.  Suppose that $D$ is a fundamental discriminant with $(-1)^sD>0$.  Then the $D$th Zagier lift of $f$ is given by
\begin{equation}\label{Fequation2}
\mathfrak{Z}_Df=\sum_{m>0} a(-m)m^{2s-1}\sum_{n|m} \chi_D(n) n^{-s}f_{s+1/2,\frac{m^2|D|}{n^2}}^{(N(n))}(z)+C(z),
\end{equation}
where $C(z)\in M_{s+1/2}(4p)$ is the unique modular form whose Fourier coefficients match those of $\mathfrak{Z}_D(f)$ for the first $l$ positive values of $n$ with $(-1)^sD\equiv 0,1\pmod 4$.
\end{theorem}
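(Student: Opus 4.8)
The plan is to follow the proof of Theorem \ref{maintheorem} step for step, for $f\in M^\sharp_{2-2s}(p)$ and $D$ a fundamental discriminant with $(-1)^sD>0$ as in the hypothesis, now producing the weight-$(s+1/2)$ basis $\{f_{s+1/2,M}^{(N(n))}\}$ in place of the weight-$(3/2-s)$ basis, and then to isolate the single genuinely new phenomenon. Because the target weight $s+1/2$ of (\ref{Fequation2}) is positive, the space $M_{s+1/2}(4p)$ of honestly holomorphic forms is nonzero, so the trace data determine the lift only up to such a form; that ambiguity is exactly the correction term $C(z)$, which is absent in Theorem \ref{maintheorem} precisely because its negative-weight target admits no nonzero holomorphic forms. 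Throughout I write $F_0=\sum_{m>0} a(-m)m^{2s-1}\sum_{n|m}\chi_D(n)n^{-s}f_{s+1/2,m^2|D|/n^2}^{(N(n))}(z)$ for the explicit sum in (\ref{Fequation2}).

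First I would match principal parts. Since the canonical basis of Theorem \ref{theorem:bases} is reduced, each $f_{s+1/2,M}^{(N)}$ contributes only its leading term $q^{-M}$ below $q^0$, so the negative-power part of $F_0$ is $\sum_{m>0}a(-m)m^{2s-1}\sum_{n|m}\chi_D(n)n^{-s}q^{-m^2|D|/n^2}$, and I would verify termwise that this is the principal part of $\mathfrak{Z}_D f$ read off from (\ref{zaglift}). The substantive part is to reproduce the positive coefficients, namely the traces $\Tr^*_{d,D}(f)$, up to the Fourier coefficients of a holomorphic form, and this runs exactly as in Theorem \ref{maintheorem}: express each $\Tr^*_{d,D}(f_{2-2s,m}^{(p)})$ through the Poincar\'e coefficients $p_{s+1/2}^{(N(n))}(-|d|,\cdot)$ of section \ref{sec2} via \cite{Miller}, regroup by the levels $4p$ and $4$, absorb the cusp-form discrepancies $F_{s+1/2,-|d|}^{(4p)}-g_{s+1/2,|d|}^{(4p)}$ and $F_{s+1/2,-|d|}^{(4)}-f_{s+1/2,|d|}^{(4)}$ into constant terms of products of the Shimura lift $\mathfrak{S}_D^{(4p)}$ with $f_{2-2s,m}^{(p)}$ that vanish at the cusp $(0)$ by the mechanism of section \ref{sec4}, and apply the coefficient duality of Theorem \ref{theorem:dualityhalfintegral} between weights $s+1/2$ and $3/2-s$ to recognize the result as coefficients of the weight-$(s+1/2)$ basis elements occurring in (\ref{Fequation2}).

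The structural payoff is that $C(z):=\mathfrak{Z}_D f-F_0$ has no principal part at $(\infty)$. Since every $f_{s+1/2,M}^{(N(n))}$ vanishes at $(0)$ and its related cusps by Theorem \ref{theorem:bases}, and since the plus-space cusp relations of section \ref{sec2} tie together the orders at $(0),(\tfrac12),(\tfrac14)$ and at $(\infty),(\tfrac1p),(\tfrac1{2p})$, the form $C(z)$ is holomorphic at every cusp, so $C(z)\in M_{s+1/2}(4p)$. To pin it down I would use that the basis of Theorem \ref{theorem:bases} is constructed with the largest possible gap: each $f_{s+1/2,M}^{(N)}$ then has vanishing coefficients at the first $l$ admissible positive exponents, so $F_0$ contributes nothing there and the first $l$ positive coefficients of $C(z)$ agree with those of $\mathfrak{Z}_D f$. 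A Sturm-bound count gives $\dim M_{s+1/2}(4p)\le l$, so a holomorphic form is determined by these $l$ coefficients, which is precisely the uniqueness claimed for $C(z)$.

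The main obstacle I expect is conceptual rather than computational: isolating the holomorphic remainder. In Theorem \ref{maintheorem} the duality-and-Shimura step forced the entire positive part, since no nonzero holomorphic form of that weight exists, whereas here one must show that the discrepancy between the traces and the positive coefficients of $F_0$ is the genuine Fourier expansion of a form in $M_{s+1/2}(4p)$, not merely a formal $q$-series, and must match the abstract integer $l$ governing the gap in the dual basis to $\dim M_{s+1/2}(4p)$ so that ``the first $l$ positive values'' really does determine $C(z)$. A secondary technical point is carrying the rescaled exponents $m^2|D|/n^2$ and divisor weights $n^{-s}$ correctly through the level-$4p$ and level-$4$ Shimura lifts, so that the vanishing-constant-term argument of section \ref{sec4} transfers without change.
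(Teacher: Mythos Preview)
Your overall scaffold—match principal parts, then recognize the leftover as a holomorphic form $C(z)$—is right, but the heart of your step 2 does not work as written. If you compare the Poincar\'e series $F_{s+1/2,-|d|}^{(4p)}$ to the $g_{s+1/2,|d|}^{(4p)}$ basis (vanishing at the cusp $(0)$), the discrepancy is indeed a cusp form and the Shimura-lift argument from Theorem~\ref{maintheorem} kills it. But the coefficient duality of Theorem~\ref{theorem:dualityhalfintegral} then sends $b_{s+1/2}^{(4p)}(|d|,M)$ to $-a_{3/2-s}^{(4p)}(M,|d|)$: you land in the weight-$(3/2-s)$ basis, not the weight-$(s+1/2)$ basis, and you have simply reproved Theorem~\ref{maintheorem} rather than obtained~(\ref{Fequation2}). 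There is no half-integral duality carrying $g_{s+1/2}$ coefficients to $f_{s+1/2}$ coefficients; the relation between those two bases of the \emph{same} weight is mediated by holomorphic forms in $M_{s+1/2}^+(4p)$, which is exactly the $C(z)$ you are trying to produce, so the argument is circular at that step.

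The paper's route is shorter and avoids both the Shimura lift and the half-integral duality entirely. After the Miller--Pixton formula and the integer-weight constant-term simplification (which you correctly describe), one writes each Poincar\'e series $F_{s+1/2,-M}^{(N)}$ directly as the canonical basis element $f_{s+1/2,M}^{(N)}$ plus a form in $M_{s+1/2}(4p)$; summing, these holomorphic pieces assemble into $C(z)$, and that is the end of the argument. The Remark immediately following the proof makes the obstruction to your approach explicit: the Shimura-lift technique that pins $C$ down in level $4$ fails at level $4p$ because the modified Shimura lift does not preserve vanishing at the cusps when applied to non-cuspidal holomorphic forms. Two minor corrections: the $f$-basis elements are \emph{holomorphic} at $(0)$ by the definition of $M^{\sharp+}$ but need not vanish there (that is the defining property of the $g$-basis); and the paper does not justify uniqueness of $C(z)$ via a Sturm bound—your argument for that is a reasonable addition, but you must check that the $l$ in the statement really equals $\dim M_{s+1/2}^+(4p)$.
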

\begin{proof}
Using 1.1 and 1.2 from Miller and Pixton \cite{Miller} as before, we have
\begin{equation}
\begin{aligned}
-\Tr^*_{d,D}(f_{2-2s,m}^{(p)})=&m^{2s-1}\sum_{n|m}\chi_D(n)n^{-s}p_{s+1/2}^{(N(n))}\left (-|d|,\tfrac{m^2|D|}{n^2}\right )\\
&+\sum_{j=1}^l a(f_{2-2s,m},-j)j^{2s-1}\sum_{h|j}\chi_D(h)h^{-s}p_{s+1/2}^{(N(n))}\left (-|d|,\tfrac{j^2|D|}{n^2}\right ).
\end{aligned}
\end{equation}

Then, writing an arbitrary form $f(z)\in M_{2-2s}^!(p)$ in terms of basis elements
$$f(z)=\sum_{m\geq l}a(-m)f_{2-2s,m}^{(4p)}(z)$$
shows that the trace of $f$ is given by

\begin{equation}\label{traceodd}
\begin{aligned}
\sum_{m\geq l}a(-m)\Bigg [&m^{2s-1}\sum_{n|m}\chi_D(n)n^{-s}p_{s+1/2}^{(N(n))}\left (-|d|,\tfrac{m^2|D|}{n^2}\right )\\
&+\sum_{j=1}^l a_{2-2s}^{(p)}(m,-j)j^{2s-1}\sum_{h|j}\chi_D(h)h^{-s}p_{s+1/2}^{(N(n))}\left (-|d|,\tfrac{j^2|D|}{n^2}\right ) \Bigg ].
\end{aligned}
\end{equation}

As before, we use the fact that
$$-\sum_{m\geq l}a(-m)a_{2s}^{(p)}(-j,m)=a(-j)$$
to rewrite the above equation \ref{traceodd} as
$$\sum_{m>0}a(-m)m^{2s-1}\sum_{n|m}\chi_D(n)n^{-s}p_{s+1/2}^{(N(n))}\left (-|d|,\tfrac{m^2|D|}{n^2}\right ).$$
But this is just the coefficient of $q^{|D|}$ in the modular form given by
$$G(z)=\sum_{m>0}a(-m)m^{2s-1}\sum_{n|m}\chi_D(n)n^{-s}F_{s+1/2,-|d|}^{(N(n))}(z).$$
Then, let $F_{s+1/2,-|d|}^{(N(n))}(z)-f_{s+1/2,|d|}^{(N(n))}(z)=C(z)$, a form in $M_{s+1/2}(4p)$.  So we can write
$$G(z)=\sum_{m>0}a(-m)m^{2s-1}\sum_{n|m}\chi_D(n)n^{-s}f_{s+1/2,|d|}^{(N(n))}(z)+C(z).$$
Arguing as in the case where $(-1)^sD>0$, the principal part of $F$ matches the principal part of $\mathfrak{Z}_D(f)$.  The negative terms match as well, concluding the proof.\\
\end{proof}

\begin{remark}
We note that although the constant term $C$ is not explicitly given in (\ref{zaglift}), it can be explicitly calculated by expressing the image of the lift using basis elements $f_{k,m}^{(N)}$.  We note that Duke and the second author give an explicit formula for $C$ when $N=4$ using a Shimura lift argument similar to that used in the proof of (\ref{maintheorem}).  Their technique does not extend to levels $4p$, because the modified Shimura lift does not preserve zeros at the cusps when the lift is extended to non-cuspidal forms.
\end{remark}

The statement on integrality in theorem \ref{theorem:ours} follows directly from theorem \ref{maintheorem} in the case where $(-1)^sD>0$.  In the case where $(-1)^sD<0$, the statement on integrality reduces to the case where $(-1)^sD>0$ using the following identity, which holds if $(-1)^sD<0$ and $D'$ is a fundamental discriminant with $(-1)^sD'>0$:
$$\Tr^*_{m^2D',D}(f)=-m^{2s-1}\sum_{a|m}\mu(a) \chi_{D'}(a) \sum_{b|(m/a)} \chi_D(b)(ab)^{-s} \Tr^*_{(\tfrac{m}{ab})^2D,D'}(f).$$
This identity is a consequence of the following lemma.
\begin{lemma}
For $D$ and $D'$ fundamental discriminants with $DD'<0$ and $m\in \Z^+$ we have
$$\mathrm{Tr}_{m^2D',D}(f)=\sum_{a|m}\mu(a) \chi_{D'}(a) \sum_{b|(m/a)} \chi_D(b) \mathrm{Tr}_{(\tfrac{m}{ab})^2D,D'}(f).$$
\end{lemma}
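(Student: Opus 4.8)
The plan is to expand both sides of the claimed identity directly in terms of the original definition of the trace as a sum over CM points, and to verify the coefficient of each individual term $w_Q^{-1}\chi(Q)f(\tau_Q)$. Recall that $\mathrm{Tr}_{d,D}(f)$ is a sum over $\Gamma_0(N)$-inequivalent positive-definite integral forms $Q = ax^2+bxy+cy^2$ of discriminant $dD$ with $N \mid a$, weighted by $w_Q^{-1}\chi(Q)$. The key point is that the form $Q$ appearing in $\mathrm{Tr}_{m^2D',D}(f)$ has discriminant $m^2D'D$, and after scaling it can be matched with forms of discriminant $(m/ab)^2 DD'$ (equivalently $(m/ab)^2 D' D$) appearing on the right-hand side. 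So the first step is to set up a bijection (or more precisely a weighted correspondence) between the set of forms of discriminant $m^2 D'D$ and pairs consisting of a divisor decomposition $m = ab \cdot (m/ab)$ together with a form of the smaller discriminant, compatible with the $\Gamma_0(N)$-equivalence and the CM point $\tau_Q$.

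\emph{Key steps, in order.} First I would recall the classical fact (going back to Gross--Kohnen--Zagier, and used in this exact form by Zagier and by Miller--Pixton) that for a fundamental discriminant $D'$, the genus character $\chi_{D'}$ together with Möbius inversion governs exactly how forms of discriminant $t^2 \delta$ relate to forms of discriminant $\delta$: summing $\chi_{D'}$ over divisors picks out the "primitive-in-the-$D'$-direction" part. Concretely, the operators $\sum_{a \mid m}\mu(a)\chi_{D'}(a)(\cdot)$ and $\sum_{b \mid (m/a)}\chi_D(b)(\cdot)$ are the two halves of a double divisor sum that, when applied to $\mathrm{Tr}_{(m/ab)^2 D, D'}$, reconstitute the single trace $\mathrm{Tr}_{m^2 D', D}$. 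Second, I would fix a CM point $\tau$ in the upper half plane that occurs in $\mathrm{Tr}_{m^2 D', D}(f)$, identify which forms $Q'$ of discriminant $(m/ab)^2 DD'$ have the same associated CM point (up to $\Gamma_0(N)$-equivalence), and compare the character values $\chi_D(Q)$ against $\chi_{D'}(Q')$ and the Kronecker symbols $\chi_D(b)$, $\chi_{D'}(a)$. Third, I would check that the stabilizer cardinalities $w_Q$ match up across the correspondence — this is automatic away from the finitely many forms with extra automorphisms, and those are handled by the $w_Q^{-1}$ normalization exactly as in the level-one case. Fourth, I would assemble these local matchings into the global identity, being careful that the condition $N \mid a$ (i.e. the level structure) is preserved under the scaling $Q \mapsto$ primitive part.

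\emph{The main obstacle} I expect is the bookkeeping of the genus characters under scaling together with the level condition $N \mid a$. In level one this identity is essentially classical and the divisor-sum manipulation is clean; at level $\Gamma_0(N)$ one must verify that when a form $Q$ of discriminant $m^2 D'D$ with $N\mid a$ is written in terms of a scaled form $Q'$ of smaller discriminant, the coefficient-divisibility condition $N \mid a'$ still holds (or is correctly accounted for), and that the character $\chi$ defined via representability of integers coprime to $D$ behaves multiplicatively in the way the double sum demands. Since $D$ and $D'$ are fundamental and coprime-ish in the relevant sense (one is the "twist" discriminant), the Kronecker symbol identity $\chi_{DD'}(n) = \chi_D(n)\chi_{D'}(n)$ is the algebraic engine, but one must track it carefully through the $a$- and $b$-sums. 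I would model the argument closely on the corresponding lemma in Miller--Pixton \cite{Miller} and on Zagier's original computation, adapting only the level-$N$ constraints; once the correspondence of forms is set up correctly, the identity follows by the standard Möbius/genus-character inversion and linearity of the trace.
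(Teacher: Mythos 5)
Your plan is essentially the paper's own proof: the authors simply state that the argument is identical to Lemma 2 of Duke and Jenkins \cite{Jenkins}, whose proof is exactly the CM-point expansion, the genus-character decomposition for imprimitive discriminants, and the M\"obius inversion that you outline. The one point you rightly flag---that the level condition $N \mid a$ survives the correspondence of forms---is the only adaptation needed beyond the level-one case, and it goes through as you expect.
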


\begin{proof}
The proof is identical to the proof given for lemma 2 in \cite{Jenkins}.\\

\end{proof}

\bibliographystyle{amsplain}

\end{document}